\newcommand{\noun}[1]{\textsc{#1}}
\numberwithin{equation}{section}
\numberwithin{figure}{section}
\theoremstyle{plain}
\newtheorem{thm}{\protect\theoremname}[section]
\theoremstyle{definition}
\newtheorem{defn}[thm]{\protect\definitionname}
\theoremstyle{plain}
\newtheorem{lem}[thm]{\protect\lemmaname}
\theoremstyle{remark}
\newtheorem{rem}[thm]{\protect\remarkname}
\theoremstyle{plain}
\newtheorem{prop}[thm]{\protect\propositionname}
\providecommand{\definitionname}{Definition}
\providecommand{\lemmaname}{Lemma}
\providecommand{\propositionname}{Proposition}
\providecommand{\remarkname}{Remark}
\providecommand{\theoremname}{Theorem}
\begin{document}

\title{Lie structure of associative algebras containing matrix subalgebras}

\author{Alexander Baranov}

\address{Department of Mathematics, University of Leicester, Leicester, LE1
7RH, UK}

\email{ab155@le.ac.uk}
\begin{abstract}
We prove that the commutator subalgebra of the associative algebra
containing a matrix subalgebra is perfect.
\end{abstract}

\maketitle
\global\long\def\bbR{\mathbb{R}}

\global\long\def\ccR{\mathcal{R}}

\begin{singlespace}
\global\long\def\bbF{\mathbb{F}}

\end{singlespace}

\global\long\def\ccF{\mathcal{F}}

\global\long\def\ccL{\mathcal{L}}

\global\long\def\ccP{\mathcal{P}}

\global\long\def\ccB{\mathcal{B}}

\global\long\def\ccS{\mathcal{S}}

\begin{singlespace}
\global\long\def\dlim{\operatorname{\underrightarrow{{\rm lim}}}}

\global\long\def\ker{\operatorname{\rm ker}}

\global\long\def\Im{\operatorname{\rm Im}}

\global\long\def\End{\operatorname{\rm End}}

\global\long\def\dim{\operatorname{\rm dim}}

\global\long\def\core{\operatorname{\rm core}}
 
\end{singlespace}

\global\long\def\skew{\operatorname{\rm skew}}

\global\long\def\Soc{\operatorname{\rm Soc}}

\global\long\def\Range{\operatorname{\rm Range}}

\global\long\def\rank{\operatorname{\rm rank}}

\global\long\def\ad{\operatorname{\rm ad}}

\global\long\def\gl{\operatorname{\rm gl}}

\global\long\def\sl{\operatorname{\rm sl}}

\global\long\def\sp{\operatorname{\rm sp}}

\global\long\def\so{\operatorname{\rm so}}

\global\long\def\u{\operatorname{\rm u^{*}}}

\global\long\def\su{\operatorname{\rm su^{*}}}

\global\long\def\rad{\operatorname{\rm rad}}

\section{Introduction}

The ground field $\bbF$ is algebraically closed of characteristic
$p\ge0$. Throughout the paper, $A$ is a (non-unital) associative
algebra over $\bbF$ containing a non-zero semisimple finite dimensional
subalgebra $S$. Recall that $A$ becomes a Lie algebra $A^{(-)}$
under the Lie bracket $[x,y]=xy-yx$. We denote by $A^{(1)}=[A^{(-)},A^{(-)}]$
its derived subalgebra. 

Recall that a Lie algebra $L$ is said to be \emph{perfect} if $[L,L]=L$.
We say that an associative algebra $A$ is\emph{ $k$-perfect} ($k\ge1)$
if $A$ has no proper ideals of codimension $\le k$. Let $S$ be
a finite dimensional semisimple algebra over $F$. Then it is the
direct sum of matrix ideals, i.e. $S=M_{n_{1}}(\bbF)\oplus\dots\oplus M_{n_{s}}(\bbF)$.
Thus, $S$ is $k$-perfect if and only if $n_{i}>\sqrt{k}$ for all
$i=1,\dots,s$. 

One of our main results is the following theorem. 
\begin{thm}
\label{thm:main} Let $A$ be an associative algebra over $\bbF$
containing a non-zero semisimple finite dimensional subalgebra $S$.
Suppose $A$ is generated by $S$ as an ideal and $S$ is either $1$-perfect
with $p\ne2$ or $4$-perfect with $p=2$. Then 

(1) $A^{(1)}$ is perfect; 

(2) $A=A^{(1)}A^{(1)}+A^{(1)}$;

(3) $A^{(1)}$ is generated by $S^{(1)}$ as an ideal;

(4) $A^{(1)}$ is $\Gamma$-graded where $\Gamma$ consists of the
roots of $S^{(1)}$ and the weights of the natural and conatural $S^{(1)}$-modules. 
\end{thm}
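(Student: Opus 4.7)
The plan is to exploit the Peirce decomposition of $A$ relative to the identity $e$ of $S$ together with the $S$-bimodule structure of each component. Writing $A=A_{11}\oplus A_{10}\oplus A_{01}\oplus A_{00}$ for the Peirce components of $e$, one sees at once that $A_{00}$ is annihilated by $S$ on both sides and so does not meet the associative ideal of $A$ generated by $S$; the hypothesis forces $A_{00}=0$. Routine Peirce arithmetic gives $A_{10}A_{10}=A_{01}A_{01}=A_{01}A_{10}=0$ and $A_{10}A_{01}\subseteq A_{11}$. Since $S=\bigoplus_{i}M_{n_{i}}(\bbF)$ and $\bbF$ is algebraically closed, the left $S$-module $A_{10}$ decomposes as a direct sum of copies of the natural modules $V_{i}=\bbF^{n_{i}}$, $A_{01}$ dually as copies of the conatural right modules $V_{i}^{*}$, and $A_{11}$ splits into blocks $A_{11}^{(i,j)}=e_{i}Ae_{j}$ with $A_{11}^{(i,i)}\cong M_{n_{i}}(B_{i})$ for $B_{i}=f_{i}Af_{i}$ and a primitive idempotent $f_{i}\in M_{n_{i}}(\bbF)$.

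Part (4) then follows directly: fixing a Cartan subalgebra $H\subseteq S^{(1)}=\bigoplus_{i}\sl_{n_{i}}$ (the sum of traceless diagonal matrices in each block), the $\ad(H)$-weight decomposition of $A$ has as its weights precisely the roots of $S^{(1)}$ (from $A_{11}$) together with the natural and conatural weights (from $A_{10}$ and $A_{01}$), giving the set $\Gamma$; since the Lie bracket is weight-preserving, $A^{(1)}$ inherits the grading.

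For (1), the hypothesis of $1$-perfectness ($p\neq2$) or $4$-perfectness ($p=2$) is tailored so that $S^{(1)}$ is itself a perfect Lie algebra and each natural action satisfies $S^{(1)}V_{i}=V_{i}$. Combined with $A_{10}\cdot S=0$, this gives $[S^{(1)},A_{10}]=S^{(1)}\cdot A_{10}=A_{10}$, and symmetrically $[S^{(1)},A_{01}]=A_{01}$. Hence $A_{10},A_{01}\subseteq A^{(1)}$, and re-applying the same identity inside $A^{(1)}$ places them in $[A^{(1)},A^{(1)}]$. A Peirce-component calculation of $[A,A]$ gives
\[
A^{(1)}=[A_{11},A_{11}]+A_{10}A_{01}+A_{10}+A_{01},
\]
with $A_{10}A_{01}=[A_{10},A_{01}]$ (since $A_{01}A_{10}\subseteq A_{00}=0$) already a bracket of elements in $A^{(1)}$. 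So (1) reduces to $[A_{11},A_{11}]\subseteq[A^{(1)},A^{(1)}]$, which I would handle by a matrix-unit computation inside each $M_{n_{i}}(B_{i})$: the $\sl_{n_{i}}(B_{i})$ piece is $[S^{(1)},A_{11}]$, rewritten via iterated commutator identities in the matrix units (using $n_{i}\geq3$ when $p=2$) as a bracket of elements already known to lie in $A^{(1)}$, while the remaining trace part $E_{ii}([B_{i},B_{i}])$ comes from $[E_{ii}(a),E_{ii}(b)]=E_{ii}([a,b])$.

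Parts (2) and (3) follow as corollaries. For (2), $A=A_{11}+A_{10}+A_{01}$ with the last two summands in $A^{(1)}$, and $A_{11}$ is generated as an algebra by $S$ together with $A_{10}A_{01}$, both of which lie in $A^{(1)}A^{(1)}+A^{(1)}$ after the above matrix-unit analysis. For (3), a parallel sequence of commutator computations shows that the Lie ideal of $A^{(-)}$ generated by $S^{(1)}$ contains $A_{10}+A_{01}+\bigoplus_{i}\sl_{n_{i}}(B_{i})$, then $A_{10}A_{01}=-[A_{01},A_{10}]$, and finally the trace part via $[A_{11},\bigoplus_{i}\sl_{n_{i}}(B_{i})]$, exhausting $A^{(1)}$. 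The main obstacle throughout is the step $[A_{11},A_{11}]\subseteq[A^{(1)},A^{(1)}]$ in (1); characteristic $2$ is genuinely delicate, and the $4$-perfectness hypothesis ($n_{i}\geq3$) is needed precisely to restore the perfectness of $\sl_{n_{i}}$ that $\sl_{2}$ lacks when $p=2$.
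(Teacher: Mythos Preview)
Your central structural claim, that $A_{00}=0$, is false, and its failure propagates through the rest of the argument. It is true that $SA_{00}=A_{00}S=0$, but the two-sided ideal of $A$ generated by $S$ is $S+SA+AS+ASA$, and the $A_{00}$-component of $ASA$ is $(1-e)ASA(1-e)=A_{01}SA_{10}$, which is in general nonzero. The correct statement (this is the paper's Proposition~\ref{prop:(i) =00003D (ii) =00003D (iii)}) is that the hypothesis forces $A_{00}=A_{01}A_{10}$, not $A_{00}=0$. Once $A_{00}\neq 0$ you lose $A_{01}A_{10}=0$, you lose $A_{10}A_{01}=[A_{10},A_{01}]$, and your displayed formula for $A^{(1)}$ omits all contributions from $A_{00}$, in particular $[A_{00},A_{00}]$ and $A_{01}A_{10}$. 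Showing that $[A_{00},A_{00}]\subseteq [A^{(1)},A^{(1)}]$ is genuinely the hardest step; the paper (Lemma~\ref{lem:V00 * ^(0,0) }) does it by writing each element of $A_{00}$ as a sum of products $x_{01}x_{10}$ via the identity $A_{00}=A_{01}A_{10}$, then expanding $x_{01}x_{10}=[x_{01},x_{10}]+x_{10}x_{01}$ to push the problem back into $A_{11}$ where the earlier lemmas apply.

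Your trace-part argument has a separate gap. The identity $[E_{ii}(a),E_{ii}(b)]=E_{ii}([a,b])$ holds in $A$, but for perfectness of $A^{(1)}$ you need the right-hand side to lie in $[A^{(1)},A^{(1)}]$, and nothing you have written places $E_{ii}(a)=\varepsilon_i\otimes a$ in $A^{(1)}$ (indeed it is not there when $p\nmid n_i$). The paper (Lemma~\ref{lem:Vii * ^(i,i) }) circumvents this by summing a cycle of commutators $[e_{r,r+1}\otimes\lambda,\,e_{r+1,r}\otimes\mu]$ of off-diagonal matrix units, each of which does lie in $A^{(1)}$, and whose sum telescopes to $\varepsilon_i\otimes[\lambda,\mu]$; a different telescoping is needed when $p\mid n_i$, which is where the $4$-perfectness assumption ($n_i\ge 3$) in characteristic~$2$ is actually used.
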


As a corollary, we get the following result for the finite dimensional
algebras, which is a generalization of \cite[Corollary 6.4]{Bav=000026Zal}
(where the case of $4$-perfect algebras in characteristic zero was
proved). 
\begin{thm}
\label{thm:fd} Let $A$ be a finite dimensional algebra over $\bbF.$
Suppose that $A$ is either $4$-perfect or $1$-perfect with $p\neq2$.
Then $A^{(1)}$ is perfect and $A=A^{(1)}A^{(1)}+A^{(1)}$. 
\end{thm}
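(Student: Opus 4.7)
The strategy is to reduce Theorem \ref{thm:fd} to Theorem \ref{thm:main} by producing a suitable semisimple subalgebra $S\subseteq A$ via a Wedderburn decomposition and then checking that the hypotheses transfer.

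First, I would obtain a vector space decomposition $A = S \oplus R$ with $R = \rad A$ and $S$ a semisimple finite-dimensional subalgebra. Since $\bbF$ is algebraically closed and $A$ is finite-dimensional, this is the classical Wedderburn principal theorem in the unital case. For possibly non-unital $A$, I would apply Wedderburn to the unitization $A^{\#} = A \oplus \bbF$, observe that $J(A^{\#}) = R$ (because $A$ is an ideal of $A^{\#}$ with semisimple quotient $\bbF$), write $A^{\#} = S^{\#} \oplus R$ with $S^{\#}$ semisimple, and take $S := S^{\#} \cap A$. Since $S^{\#}\cap R = 0$ and a dimension count shows $\dim S + \dim R = \dim A$, this gives $A = S \oplus R$ with $S$ semisimple.

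Next I would check that $S$ is non-zero and inherits the perfectness assumption. The key observation is that $k$-perfectness of $A$ for $k \ge 1$ rules out $A$ being nilpotent: otherwise $A/A^{2}$ would be a non-zero algebra with trivial multiplication, every hyperplane of which pulls back to a codimension-$1$ ideal of $A$, contradicting $1$-perfectness. Hence $S \cong A/R \ne 0$. Since ideals of $S$ correspond bijectively to ideals of $A$ containing $R$ of the same codimension, $S$ is $k$-perfect whenever $A$ is, and the perfectness assumption of Theorem \ref{thm:main} is thereby satisfied under either hypothesis of Theorem \ref{thm:fd}. A parallel argument shows $A$ is generated by $S$ as an ideal: if $I$ denotes the ideal of $A$ generated by $S$, then $A/I$ is a homomorphic image of $R$ and hence nilpotent, so if $I$ were proper, the nilpotence of $A/I$ would again produce a codimension-$1$ ideal in $A$, contradicting $1$-perfectness.

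With all hypotheses of Theorem \ref{thm:main} verified, its conclusions (1) and (2) give exactly the two assertions of Theorem \ref{thm:fd}. The only mildly delicate step is the non-unital version of the Wedderburn decomposition, handled by the unitization above; the remaining steps are book-keeping with ideals and codimensions, all of which hinge on the single fact that nilpotent quotients obstruct $1$-perfectness.
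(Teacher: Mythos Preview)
Your proposal is correct and follows essentially the same route as the paper: take a Levi (Wedderburn) complement $S$ of the radical, observe that $S\cong A/\rad A$ inherits $k$-perfectness, show that $A$ is generated by $S$ as an ideal via the nilpotent-quotient argument (this is exactly the content and proof of Proposition~\ref{levi}), and then invoke the main theorem. The only differences are cosmetic: you spell out the non-unital Wedderburn decomposition explicitly and reprove Proposition~\ref{levi} inline rather than citing it, and you appeal to Theorem~\ref{thm:main} directly whereas the paper cites the equivalent Theorem~\ref{thm:L=00003D=00005BA,A=00005D is perfect}.
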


As an application, we are going to show that the Lie algebras $A^{(1)}$
which appear in Theorem \ref{thm:main} are actually \emph{root-graded}.
The following definition a slight generalization of Berman and Moody's
definition of root graded Lie algebras in \cite{berman1992lie}.
\begin{defn}
\begin{singlespace}
\noindent Let $\Delta$ be a root system and let $\Gamma$ be a finite
set of integral weights of $\Delta$ containing $\Delta$ and $\{0\}$.
A Lie algebra $L$ over a field $\bbF$ of characteristic zero is
said to be \emph{$(\Gamma,\mathfrak{g})$-graded }(or simply\emph{
$\Gamma$-graded}) if 

\noindent $(\Gamma1)$ $L$ contains as a subalgebra a finite-dimensional
split semisimple Lie algebra 
\[
\mathit{\mathrm{\mathfrak{g}=\mathfrak{h}}\oplus\mathrm{\underset{\alpha\in\mathit{\text{\ensuremath{\Delta}}}}{\bigoplus}}}\mathit{\mathrm{\mathfrak{g}}}_{\alpha},
\]
 whose root system is $\Delta$ relative to a split Cartan subalgebra
$\text{\ensuremath{\mathfrak{h}}}=\mathfrak{g}_{0}$;

\noindent $(\Gamma2)$ $L=\underset{\alpha\in\Gamma}{\bigoplus}L_{\alpha}$
where $L_{\alpha}=\left\{ x\in\mathrm{\mathit{L}\mid\left[\mathit{h,x}\right]=\mathit{\alpha\left(h\right)x\text{ for all }h\in\mathrm{\mathit{\mathfrak{h}}}}}\right\} $;

\noindent $(\Gamma3)$ $L_{0}=\underset{\alpha,-\alpha\in\Gamma\setminus\{0\}}{\overset{}{\sum}}\left[L_{\alpha},L_{-\alpha}\right]$.
\end{singlespace}
\end{defn}

\begin{thm}
\label{thm:rg} Let $A$ and $S$ be as in Theorem \ref{thm:main}.
Let $Q_{1},\dots,Q_{t}$ be the simple components of the Lie algebra
$S^{(1)}$. Then the Lie algebra $A^{(1)}$ is $\Gamma$-graded where
$\Gamma$ consists of the roots of $S^{(1)}$ and all weights of the
form $\lambda_{i}+\lambda_{j}$ where $1\le i<j\le t$ and $\lambda_{i}$
is either zero or one of the weights of the natural or conatural $Q_{i}$-modules. 
\end{thm}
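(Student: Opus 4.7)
The plan is to bootstrap from Theorem~\ref{thm:main}(4), using the Peirce decomposition of $A$ relative to the central idempotents of $S$ to sharpen the description of the weight support. Let $e_1,\dots,e_t$ denote the primitive central idempotents of $S$, so that $e_iSe_i\cong M_{n_i}(\bbF)$ and $Q_i\cong\sl_{n_i}$. Take $\mathfrak{g}=S^{(1)}=Q_1\oplus\cdots\oplus Q_t$ with split Cartan $\mathfrak{h}=\mathfrak{h}_1\oplus\cdots\oplus\mathfrak{h}_t$, where $\mathfrak{h}_i\subset Q_i$ is the standard diagonal Cartan. Since $\bbF$ is algebraically closed (and of characteristic zero, which is part of the definition of a $\Gamma$-graded Lie algebra), $\mathfrak{g}$ is a split finite-dimensional semisimple Lie subalgebra of $A^{(1)}$ with root system $\Delta=\bigsqcup_i\Delta(Q_i)$, yielding $(\Gamma 1)$.

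For $(\Gamma 2)$, decompose $A=\bigoplus_{1\le i,j\le t}e_iAe_j$; each Peirce component is $\mathfrak{h}$-invariant under the adjoint action because $e_i\mathfrak{h}e_j=0$ for $i\ne j$. On a diagonal block $e_iAe_i$, only $\mathfrak{h}_i$ acts nontrivially and does so via commutators in $M_{n_i}(\bbF)$, producing weights in $\Delta(Q_i)\cup\{0\}$. On an off-diagonal block $e_iAe_j$ with $i\ne j$, $\mathfrak{h}_i$ acts by left multiplication (contributing a natural weight $\mu_i$ of $Q_i$) and $\mathfrak{h}_j$ by minus right multiplication (contributing a conatural weight $-\mu_j$ of $Q_j$), so the occurring weights are $\mu_i-\mu_j$, which after relabeling so that $i<j$ are exactly of the form $\lambda_i+\lambda_j$ with $\lambda_i$ and $\lambda_j$ natural or conatural. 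Since $A^{(1)}\subseteq A$ is $\mathfrak{h}$-stable, its grading is supported on the set $\Gamma$ described in the statement.

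For $(\Gamma 3)$, note that by Theorem~\ref{thm:main}(1) the algebra $A^{(1)}$ is perfect, so $L_0\subseteq\sum_{\alpha\in\Gamma}[L_\alpha,L_{-\alpha}]$; the real task is absorbing the contributions with $\alpha=0$. Because each natural weight of $Q_i$ is nonzero on $\mathfrak{h}_i$, off-diagonal Peirce blocks have no weight-$0$ part and $L_0=\bigoplus_i(e_iA^{(1)}e_i)_0$. Within each diagonal block we combine the identity $\mathfrak{h}_i=\sum_{\alpha\in\Delta(Q_i)}[(Q_i)_\alpha,(Q_i)_{-\alpha}]$ with Theorem~\ref{thm:main}(3), which says $A^{(1)}$ is generated by $S^{(1)}$ as a Lie ideal: a Jacobi-identity rearrangement rewrites any weight-$0$ element of $e_iA^{(1)}e_i$ as a sum of brackets $[L_\alpha,L_{-\alpha}]$ with $\alpha\in\Delta(Q_i)\setminus\{0\}$. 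The principal obstacle is precisely this weight-$0$ rearrangement inside each diagonal Peirce block; perfectness of $A^{(1)}$ together with the ideal-generation in Theorem~\ref{thm:main}(3) is exactly what forces every weight-$0$ commutator to be expressible without pairing two weight-$0$ elements.
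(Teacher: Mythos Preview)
The paper does not actually contain a written proof of Theorem~\ref{thm:rg}; it is meant to follow from the structure theory of Sections~2--3 (the $S$-decomposition and the identification $A^{(1)}=L$ in Theorem~\ref{thm:L=00003D=00005BA,A=00005D is perfect}), but the verification of $(\Gamma3)$ is left implicit. So the comparison here is between your sketch and the machinery the paper provides.

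Your argument has a genuine gap at the very first step. You write $A=\bigoplus_{1\le i,j\le t}e_iAe_j$, but this is false in general: $A$ is \emph{non-unital}, and even in $\hat A=A\oplus\bbF\mathbf{1}$ the element $e_1+\cdots+e_t=\mathbf{1}_S$ is not the identity. The paper repairs this by adjoining the extra idempotent $e_0=\mathbf{1}-\sum_i e_i$ and working with $\hat I=I\cup\{0\}$, obtaining $A=\bigoplus_{i,j\in\hat I}e_iAe_j=\bigoplus_{i,j\in\hat I}V_{ij}\otimes\Lambda_A(i,j)$. The ``hidden'' Peirce blocks $e_0Ae_j$, $e_iAe_0$, $e_0Ae_0$ are typically nonzero. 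This is not a cosmetic issue:
for $(\Gamma2)$, the blocks $e_0Ae_j$ and $e_iAe_0$ carry exactly the weights $\lambda_j$ (resp.\ $\lambda_i$) of a single natural or conatural module --- this is precisely why the theorem allows $\lambda_i=0$ in $\lambda_i+\lambda_j$, a case your description (``$\lambda_i$ and $\lambda_j$ natural or conatural'') omits;
for $(\Gamma3)$, the block $e_0A^{(1)}e_0$ sits entirely inside $L_0$, and your argument (``$L_0=\bigoplus_i(e_iA^{(1)}e_i)_0$'' followed by a Jacobi rearrangement using root vectors of $Q_i$) does not touch it, since there is no $Q_0$ and no root $\alpha\in\Delta(Q_0)$ to pair against.

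What is actually needed for the $(0,0)$-part of $L_0$ is the content of Proposition~\ref{prop:(i) =00003D (ii) =00003D (iii)} (equivalently Theorem~\ref{thm:main}(3) in associative form): the hypothesis that $A$ is generated by $S$ as an ideal forces $\Lambda_A(0,0)=\sum_{i\in I}\Lambda_A(0,i)\Lambda_A(i,0)$, so every element of $e_0A^{(1)}e_0$ can be reached via products of elements of $e_0Ae_i$ and $e_iAe_0$; a commutator manipulation as in Lemma~\ref{lem:V00 * ^(0,0) } then expresses it through $[L_\alpha,L_{-\alpha}]$ with $\alpha$ a nonzero (co)natural weight. Your invocation of Theorem~\ref{thm:main}(3) (``generated by $S^{(1)}$ as an ideal'') is the right instinct, but it has to be applied to the $(0,0)$-block, not only to the diagonal blocks $e_iA^{(1)}e_i$ with $i\in I$, and the ``Jacobi-identity rearrangement'' you cite is not what does the work there.
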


\section{$S$-decomposition of $A$}

Throughout the paper, $A$ is a (non-unital) associative algebra over
$\bbF$ containing a non-zero semisimple finite dimensional subalgebra
$S$. We denote by $\hat{A}$ the unital algebra obtained by adjoining
an identity element $\mathbf{1}=\mathbf{1}_{\hat{A}}$ to $A$, i.e.
$\hat{A}=A\oplus\bbF\mathbf{1}$ as a vector space and $A$ is an
ideal of $\hat{A}$ of codimension 1. Similarly, we denote by $\hat{S}$
the unital semisimple subalgebra $S\oplus\bbF\mathbf{1}$ of $\hat{A}$.
We denote by $M_{n}$ the algebra of $n\times n$ matrices over $\bbF$. 

Let $\{S_{i}:i\in I\}$ be the set of the simple components of $S$.
Then the set of the simple components of $\hat{S}$ is $\{S_{i}:i\in\hat{I}\}$
where $\hat{I}=I\cup\{0\}$ and $S_{0}\cong\bbF$. Let $\mathbf{1}_{S_{i}}$
be the identity element of $S_{i}$, $i\in\hat{I}$. Then $S_{0}=\bbF\mathbf{1}_{S_{0}}$,
$\mathbf{1}=\sum_{i\in\hat{I}}\mathbf{1}_{S_{i}}$ and $\{\mathbf{1}_{S_{i}}\mid i\in\hat{I}\}$
is the complete set of the primitive central idempotents of $\hat{S}$.
We identify each $S_{i}$ with the matrix algebra $M_{n_{i}}$ (so
$\dim S_{i}=n_{i}^{2}$). Let $V_{i}$ be the natural left $S_{i}$-module.
Then the dual space $V_{i}^{*}$ is the natural right $S_{i}$-module.
We identify the space $V_{i}$ (resp. $V_{i}^{*}$) with the column
(resp. row) space $\bbF^{n_{i}}$. 

Let $M$ be a (not necessarily unital) $S$-bimodule. Set $\mathbf{1}m=m\mathbf{1}=m$
for all $m\in M$. Then $M$ is a unital $\hat{S}$-bimodule or equivalently,
unital left $\mathcal{S}$-module, where 
\[
\mathcal{S}=\hat{S}\otimes\hat{S}^{op}=\underset{i,j\in\hat{I}}{\bigoplus}S_{i}\otimes S_{j}^{op}
\]
is the\emph{ enveloping algebra }of $S$. Put $\mathcal{S}_{ij}=S_{i}\otimes S_{j}^{op}\cong M_{n_{i}}(\bbF)\otimes M_{n_{j}}(\bbF)\cong M_{n_{i}n_{j}}(\bbF)$
for all $i,j\in\hat{I}$. Then each $\mathcal{S}_{ij}$ is a simple
component of $\mathcal{S}$ and $\mathcal{S}=\bigoplus_{i,j\in\hat{I}}\mathcal{S}_{ij}$
is semisimple. Thus, $M$ is the direct sum (possibly infinite) of
simple left $\mathcal{S}$-modules, or equivalently, $\hat{S}$-bimodules.
Put $V_{ij}=V_{i}\otimes V_{j}^{*}$ for all $i,j\in\hat{I}$. We
identify each $V_{ij}$ with the space of all $n_{i}\times n_{j}$
matrices over $\bbF$. Then $V_{ij}$ is the natural left $\mathcal{S}_{ij}$-module
(resp. $S_{i}$-$S_{j}$-bimodule). Hence, as a left $\mathcal{S}$-module,
$M$ is the direct sum of copies of $V_{ij}$, $i,j\in\hat{I}$. By
collecting together the isomorphic copies, we obtain the following.
\begin{lem}
\label{lem:M bimodule} Let $M$ be an $S$-bimodule. Then, as an
$\hat{S}$-bimodule, 
\[
M\cong\bigoplus_{i,j\in\hat{I}}V_{ij}\otimes\Lambda_{M}(i,j)
\]
for some vector spaces $\Lambda_{M}(i,j)$.
\end{lem}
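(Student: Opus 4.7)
The plan is to view $M$ as a unital module over the semisimple enveloping algebra $\mathcal{S}$ and apply Artin--Wedderburn. The convention $\mathbf{1}m = m\mathbf{1} = m$ upgrades the $S$-bimodule $M$ to a unital $\hat{S}$-bimodule, and hence to a unital left $\mathcal{S}$-module via $(x \otimes y^{op})\cdot m := xmy$. As already recorded in the paragraph preceding the lemma, $\mathcal{S} = \bigoplus_{i,j \in \hat{I}} \mathcal{S}_{ij}$ is finite-dimensional and semisimple, with each simple component isomorphic to the matrix algebra $M_{n_i n_j}(\bbF)$.

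Next I would invoke the fact that, because $\mathcal{S}$ is semisimple and unital, every unital left $\mathcal{S}$-module is a direct sum of simple submodules, \emph{regardless of dimension}. Each simple $\mathcal{S}$-module is annihilated by all but one of the central idempotents $\mathbf{1}_{S_i} \otimes \mathbf{1}_{S_j}^{op}$, and is therefore a simple $\mathcal{S}_{ij}$-module for exactly one pair $(i,j) \in \hat{I} \times \hat{I}$. Since $\mathcal{S}_{ij}$ is a matrix algebra over $\bbF$, its unique simple module up to isomorphism is the natural one, which is precisely $V_{ij} = V_i \otimes V_j^*$. Grouping the isomorphic summands gives cardinals $\kappa_{ij}$ with $M \cong \bigoplus_{i,j \in \hat{I}} V_{ij}^{(\kappa_{ij})}$ as $\hat{S}$-bimodules.

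To finish, for each pair $(i,j)$ I would fix a vector space $\Lambda_M(i,j)$ over $\bbF$ of dimension $\kappa_{ij}$ and use the canonical identification $V_{ij}^{(\kappa_{ij})} \cong V_{ij} \otimes_{\bbF} \Lambda_M(i,j)$, where $\hat{S}$ acts only on the first tensor factor. Assembling these identifications over all pairs $(i,j)$ yields the desired decomposition. There is no substantive obstacle here: the lemma is essentially a bookkeeping consequence of Artin--Wedderburn. The only point worth emphasising is that the decomposition is valid without any finite-dimensionality hypothesis on $M$, which is guaranteed by the semisimplicity of the finite-dimensional unital ring $\mathcal{S}$.
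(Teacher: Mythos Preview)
Your proposal is correct and follows essentially the same approach as the paper: the paragraph preceding the lemma already sets up $M$ as a unital left $\mathcal{S}$-module over the semisimple enveloping algebra $\mathcal{S}=\bigoplus_{i,j}\mathcal{S}_{ij}$, decomposes it into simple summands $V_{ij}$, and then obtains the lemma ``by collecting together the isomorphic copies''. Your write-up merely makes explicit the multiplicity-space identification $V_{ij}^{(\kappa_{ij})}\cong V_{ij}\otimes\Lambda_M(i,j)$, which is exactly what the paper intends.
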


\begin{rem}
Recall that $S_{0}\cong\bbF$ is 1-dimensional and $\hat{S}=S_{0}\oplus S$.
Therefore, $V_{00}$ is a 1-dimensional $S_{0}$-bimodule, $M_{0}:=V_{00}\otimes\Lambda_{M}(0,0)$
is a\emph{ trivial} $S$-sub-bimodule of $M$ (i.e. $SM_{0}=M_{0}S=0$)
and $M=(SM+MS)\oplus M_{0}$.
\end{rem}

Recall that $S$ is a subalgebra of $A$, so $A$ is an $S$-bimodule.
Hence, by Lemma \ref{lem:M bimodule}, there is an $\hat{S}$-bimodule
isomorphism 
\begin{equation}
\theta:A\rightarrow\bigoplus_{i,j\in\hat{I}}V_{ij}\otimes\Lambda_{A}(i,j),\label{eq:A to Wij.^(i,j)}
\end{equation}
where $\hat{I}=I\cup\{0\}$, $\Lambda_{A}(i,j)$ are vector spaces
and $V_{ij}$ is the space of all $n_{i}\times n_{j}$ matrices over
$\bbF$. Hence, the space 
\[
\mathbf{V}=\underset{i,j\in\hat{I}}{\bigoplus}V_{ij}
\]
is an associative algebra with respect to the standard matrix multiplication
(with $V_{ij}V_{st}=0$ unless $j=s$). We denote by $\{e_{st}^{ij}\mid1\leq s\leq n_{i},\ 1\le t\leq n_{j}\}$
(or simply $e_{st}$) the standard basis of $V_{ij}$ consisting of
matrix units. For example, $\{e_{11},e_{12},\ldots,e_{1n_{j}}\}$
is the basis of $V_{0n_{j}}=V_{0}\otimes V_{j}^{*}$. We will identify
$A$ with $\theta(A)$. 

\selectlanguage{english}%

\selectlanguage{british}%
Consider any two simple $\ccS$-submodules (or equivalently, $\hat{S}$-sub-bimodules)
$W_{ij}=V_{ij}\otimes\lambda_{ij}$ and $W_{st}=V_{st}\otimes\lambda_{st}$
of $A$. Then the product $W_{ij}W_{st}=W_{ij}S_{j}S_{s}W_{st}=0$
unless $j=s$. If $j=s$, then 
\[
W_{ij}W_{jt}=W_{ij}S_{j}W_{jt}=(V_{ij}\otimes\lambda_{ij})S_{j}(V_{jt}\otimes\lambda_{jt})
\]
 which is a homomorphic image of the $\mathcal{S}_{it}$-module 
\[
V_{ij}\otimes_{S_{j}}V_{jt}\cong V_{i}\otimes V_{j}^{*}\otimes_{S_{j}}V_{j}\otimes V_{t}^{*}\cong V_{i}\otimes V_{t}^{*}=V_{it}.
\]
Therefore, 

\begin{equation}
(V_{ij}\otimes\lambda_{ij})(V_{jt}\otimes\lambda_{jt})=V_{it}\otimes\lambda_{it}\label{eq:vtimesv}
\end{equation}
for some $\lambda_{it}\in\Lambda_{A}(i,t)$ (defined up to a scalar
multiple). Note that $e_{11}$ exists in every $V_{ij}$ for $i,j\in\hat{I}$
and the product $(e_{11}\otimes\lambda_{ij})(e_{11}\otimes\lambda_{jt})$
must be equal to a scalar multiple of $e_{11}\otimes\lambda_{it}$.
Thus, by rescaling $\lambda_{it}$ if necessary, we can assume that
there is a binary product $(\lambda_{ij},\lambda_{jt})\mapsto\lambda_{ij}\lambda_{jt}$
such that 
\[
(e_{11}\otimes\lambda_{ij})(e_{11}\otimes\lambda_{jt})=e_{11}\otimes\lambda_{ij}\lambda_{jt}
\]
for all $\lambda_{ij}\in\Lambda_{A}(i,j)$ and $\lambda_{jt}\in\Lambda_{A}(j,t)$.
It is easy to see that this binary product is bilinear and associative
(since the product of $V$'s in (\ref{eq:vtimesv}) is associative).
We get the following.
\begin{lem}
\label{lem:^(i,j)} There is a multiplication structure on the space
$\Lambda_{A}:=\bigoplus_{i,j\in\hat{I}}\Lambda_{A}(i,j)$ satisfying
the following conditions:

(i) $\Lambda_{A}(i,j)\Lambda_{A}(s,t)=0$, if $j\neq s$;

(ii) $\Lambda_{A}(i,j)\Lambda_{A}(j,t)\subseteq\Lambda_{A}(i,t)$;

(iii) $\Lambda_{A}$ is an associative algebra with respect to this
multiplication;

(iv) $S_{i}=V_{ii}\otimes\mathbf{1}_{i}$ ($i\in I$) where $\mathbf{1}_{i}$
is the identity element of the subalgebra $\Lambda_{A}(i,i)$;

(v) $S=\bigoplus_{i\in I}S_{i}=\bigoplus_{i\in I}V_{ii}\otimes\mathbf{1}_{i}$. 
\end{lem}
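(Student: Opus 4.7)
The plan is to verify the five properties in sequence, leveraging the bilinear product on $\Lambda_{A}$ already constructed in the paragraph preceding the lemma, together with the matrix structure of $\mathbf{V}$ and the $\hat{S}$-bimodule decomposition (\ref{eq:A to Wij.^(i,j)}). I would handle property (i) first: for $j\neq s$ one has $V_{ij}V_{st}=0$ in $\mathbf{V}$ by construction, so every product $(e_{11}\otimes\lambda_{ij})(e_{11}\otimes\lambda_{st})$ inside $A$ must vanish, forcing $\lambda_{ij}\lambda_{st}=0$ in $\Lambda_{A}$. Property (ii) is exactly the content of (\ref{eq:vtimesv}), extended bilinearly in $\lambda_{ij}$ and $\lambda_{jt}$.

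For property (iii), I would test associativity on the distinguished generators $e_{11}\otimes\lambda$. Associating the triple product $(e_{11}\otimes\lambda_{ij})(e_{11}\otimes\lambda_{jt})(e_{11}\otimes\lambda_{tu})$ in two ways inside the associative algebra $A$ yields
\[
e_{11}\otimes(\lambda_{ij}\lambda_{jt})\lambda_{tu}=e_{11}\otimes\lambda_{ij}(\lambda_{jt}\lambda_{tu}),
\]
and since $e_{11}\ne0$ the two bracketings agree in $\Lambda_{A}$; when the indices fail to chain up, both sides vanish by (i), so associativity holds in general.

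Properties (iv) and (v) together amount to locating $S$ inside the bimodule decomposition. The identity $S=\bigoplus_{i\in I}S_{i}$ is the given semisimple decomposition of $S$. Each $S_{i}\cong M_{n_{i}}$, viewed as an $\hat{S}$-sub-bimodule of $A$, is isomorphic to the simple bimodule $V_{ii}$ with multiplicity one, so under the identification $\theta$ it must have the form $V_{ii}\otimes\mathbf{1}_{i}$ for some distinguished nonzero $\mathbf{1}_{i}\in\Lambda_{A}(i,i)$. To see that $\mathbf{1}_{i}$ is the identity of the subalgebra $\Lambda_{A}(i,i)$, I would take an arbitrary $\lambda\in\Lambda_{A}(i,i)$ and compute $(e_{11}\otimes\mathbf{1}_{i})(e_{11}\otimes\lambda)$ in two ways: on the one hand it equals $e_{11}\otimes\mathbf{1}_{i}\lambda$ by the definition of multiplication in $\Lambda_{A}$; on the other hand, since $e_{11}\otimes\mathbf{1}_{i}\in S_{i}$ acts on $V_{ii}\otimes\lambda$ via the intrinsic left $S_{i}$-module structure on $V_{ii}$, the product equals $e_{11}\otimes\lambda$, forcing $\mathbf{1}_{i}\lambda=\lambda$; the right-identity relation is symmetric.

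The only genuinely delicate step in the whole argument is the initial rescaling used to turn the assignment $(\lambda_{ij},\lambda_{jt})\mapsto\lambda_{ij}\lambda_{jt}$ into a well-defined bilinear map, and that choice has already been executed in the preamble to the lemma. Everything else is organized bookkeeping driven by the matrix structure of $\mathbf{V}$.
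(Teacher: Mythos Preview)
Your proposal follows the paper's approach closely---indeed the paper's ``proof'' is exactly the discussion preceding the lemma, which you are fleshing out---and parts (ii), (iii), (iv), (v) are handled correctly.

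There is, however, a genuine logical slip in your justification of (i). You write that $V_{ij}V_{st}=0$ in $\mathbf{V}$ ``by construction, so every product $(e_{11}\otimes\lambda_{ij})(e_{11}\otimes\lambda_{st})$ inside $A$ must vanish.'' That implication is not available at this point: the map $\theta$ in (\ref{eq:A to Wij.^(i,j)}) is so far only an $\hat{S}$-bimodule isomorphism, and the fact that it respects the algebra structure is Proposition~\ref{prop:Vij.^(i,j )  is an algebra}, which comes \emph{after} the present lemma and relies on it. Invoking the matrix-algebra product on $\mathbf{V}$ here is circular. The paper instead argues directly inside $A$: for $W_{ij}=V_{ij}\otimes\lambda_{ij}$ one has $W_{ij}=W_{ij}\mathbf{1}_{S_j}$ and $W_{st}=\mathbf{1}_{S_s}W_{st}$ (from the bimodule characterization of the isotypic components), whence $W_{ij}W_{st}=W_{ij}\mathbf{1}_{S_j}\mathbf{1}_{S_s}W_{st}=0$ for $j\neq s$. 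With this idempotent argument in place, one may then \emph{define} $\lambda_{ij}\lambda_{st}:=0$ for $j\neq s$, and (i) holds by fiat while remaining compatible with the product in $A$. Replace your sentence for (i) with this and the proof is complete.
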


Since both $\mathbf{V}$ and $\Lambda_{A}$ are associative algebras,
their tensor product $\mathbf{V}\otimes\Lambda_{A}$ is also an associative
algebra. It is easy to see that $\bigoplus_{i,j\in\hat{I}}V_{ij}\otimes\Lambda_{A}(i,j)$
is a subalgebra of $\mathbf{V}\otimes\Lambda_{A}$ with the following
product: for all $X\otimes\lambda\in V_{ij}\otimes\Lambda_{A}(i,j)$
and $Y\otimes\mu\in V_{st}\otimes\Lambda_{A}(s,t)$, 

\[
(X\otimes\lambda)(Y\otimes\mu)=XY\otimes\lambda\mu
\]
where $XY$ is the matrix multiplication (with $XY=0$ if $j\neq s$)
and $\lambda\mu$ is the multiplication in $\Lambda_{A}$. Moreover,
this subalgebra is isomorphic to $A$. Thus we get the following.
\begin{prop}
\label{prop:Vij.^(i,j )  is an algebra} $\theta$ is an isomorphism
of associative algebras.
\end{prop}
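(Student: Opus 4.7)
The plan is to build on Lemma \ref{lem:M bimodule} and Lemma \ref{lem:^(i,j)}, which already produce $\theta$ as a bijective $\hat{S}$-bimodule map and equip $\Lambda_A$ with a well-defined associative multiplication; only the multiplicativity $\theta(ab) = \theta(a)\theta(b)$ remains to be checked. By bilinearity I reduce at once to pairs $a \in V_{ij} \otimes \lambda_{ij}$ and $b \in V_{st} \otimes \lambda_{st}$ lying in simple $\hat{S}$-sub-bimodules. The case $j \ne s$ is immediate, since $a = a\mathbf{1}_{S_j}$, $b = \mathbf{1}_{S_s}b$ and $\mathbf{1}_{S_j}\mathbf{1}_{S_s} = 0$, so both sides vanish (the right-hand one also from $V_{ij}V_{st} = 0$ in $\mathbf{V}$).

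In the remaining case $j = s$, equation (\ref{eq:vtimesv}) already identifies $(V_{ij}\otimes\lambda_{ij})(V_{jt}\otimes\lambda_{jt})$ with some $V_{it}\otimes\lambda_{it}$, and the scalar $\lambda_{it}$ was pinned down by the chosen normalization $(e_{11}\otimes\lambda_{ij})(e_{11}\otimes\lambda_{jt}) = e_{11}\otimes\lambda_{ij}\lambda_{jt}$. The sole piece of genuine content in the proposition is therefore to show that this one normalization, made at the distinguished matrix unit $e_{11}$, automatically forces the correct product formula at every other pair of matrix units. I expect this consistency check to be the main, if rather mild, obstacle; once it is cleared, multiplicativity on each homogeneous component follows at once.

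To handle it I exploit the factorization $e_{ab}^{ij} = e_{a1}^{ii}\, e_{11}^{ij}\, e_{1b}^{jj}$ in $\mathbf{V}$, which by Lemma \ref{lem:^(i,j)}(iv) lifts to a genuine product in $A$ whose outer factors lie in $S$: namely $e_{ab}^{ij}\otimes\lambda_{ij} = e_{a1}^{ii}\cdot(e_{11}^{ij}\otimes\lambda_{ij})\cdot e_{1b}^{jj}$, and similarly for $b = e_{cd}^{jt}\otimes\lambda_{jt}$. Expanding the product $ab$ by associativity I use, in turn, (i) the matrix-unit collision $e_{1b}^{jj}e_{c1}^{jj} = \delta_{bc}e_{11}^{jj}$ inside $S_j$, (ii) the defining normalization of $\Lambda_A$-multiplication at $e_{11}$, and (iii) the $\hat{S}$-bimodule compatibility of $\theta$ to reabsorb the outer factors, and I obtain $ab = \delta_{bc}\, e_{ad}^{it}\otimes\lambda_{ij}\lambda_{jt}$. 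This is exactly the product $(e_{ab}^{ij}\otimes\lambda_{ij})(e_{cd}^{jt}\otimes\lambda_{jt})$ computed in $\mathbf{V}\otimes\Lambda_A$, so $\theta(ab) = \theta(a)\theta(b)$ in general, and the proposition follows. Everything else is index bookkeeping.
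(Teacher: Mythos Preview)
Your argument is correct and is precisely the verification the paper has in mind: the paper itself gives no formal proof of this proposition, merely asserting in the paragraph preceding it that the tensor-product multiplication on $\mathbf{V}\otimes\Lambda_A$ restricts to $\bigoplus_{i,j}V_{ij}\otimes\Lambda_A(i,j)$ and that the resulting isomorphism with $A$ is ``easy to see''. Your reduction via the factorization $e_{ab}^{ij}=e_{a1}^{ii}\,e_{11}^{ij}\,e_{1b}^{jj}$ together with the $e_{11}$-normalization of the $\Lambda_A$-product is exactly the intended check (with the harmless caveat that for $i=0$ or $j=0$ the corresponding outer factor lies in $\hat{S}$ rather than $S$ and acts as the identity).
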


\begin{rem}
Suppose that $A$ is finite dimensional and $S$ in Proposition \ref{prop:Vij.^(i,j )  is an algebra}
is a Levi (i.e. maximal semisimple) subalgebra of $A$. Then $\Lambda_{A}/\rad\Lambda_{A}\cong\bigoplus_{i\in I}\bbF\mathbf{1}_{i}$.
In particular, $\Lambda_{A}$ is a basic algebra. Moreover, it is
not difficult to see that $\Lambda_{A}$ is Morita equivalent to $A$.
\end{rem}

We will identify the algebra $A$ with its image $\theta(A)=\bigoplus_{i,j\in\hat{I}}V_{ij}\otimes\Lambda_{A}(i,j)$. 
\begin{defn}
\label{def:S-decomposition} We say that $A=\bigoplus_{i,j\in\hat{I}}V_{ij}\otimes\Lambda_{A}(i,j)$
is the \emph{$S$-decomposition} of $A$.
\end{defn}

Let $A=\bigoplus_{i,j\in\hat{I}}V_{ij}\otimes\Lambda_{A}(i,j)$ be
the $S$-decomposition of $A$ and let $B$ be a subalgebra of $A$
such that $SB+BS\subseteq B$. Then $B$ is an $S$-sub-bimodule of
$A$ and 
\begin{equation}
B=\bigoplus_{i,j\in\hat{I}}V_{ij}\otimes\Lambda_{B}(i,j)\label{eq:Bdec}
\end{equation}
where $\Lambda_{B}=\bigoplus_{i,j\in\hat{I}}\Lambda_{B}(i,j)$ is
a subspace of $\Lambda_{A}=\bigoplus_{i,j\in\hat{I}}(i,j)$. We say
that (\ref{eq:Bdec}) is the \emph{$S$-decomposition} of $B$. Using
Proposition \ref{prop:Vij.^(i,j )  is an algebra}, we obtain the
following. 
\begin{prop}
\label{prop:^B sub ^A} Let $B$ be a subalgebra of $A$ such that
$SB+BS\subseteq B$. Then

(i) $\Lambda_{B}$ is a subalgebra of $\Lambda_{A}$.

(ii) If $B$ is an ideal of $A$, then $\Lambda_{B}$ is an ideal
of $\Lambda_{A}$. 
\end{prop}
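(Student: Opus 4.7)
The plan is to deduce both parts from Proposition~\ref{prop:Vij.^(i,j )  is an algebra}, which identifies the multiplication on $A$ with the tensor rule $(X\otimes\lambda)(Y\otimes\mu)=XY\otimes\lambda\mu$. First I will observe that the hypothesis $SB+BS\subseteq B$, combined with the convention $\mathbf{1}b=b\mathbf{1}=b$, makes $B$ a unital $\hat{S}$-sub-bimodule of $A$, so Lemma~\ref{lem:M bimodule} applies to $B$ and yields the decomposition (\ref{eq:Bdec}). The containment $\Lambda_B(i,j)\subseteq\Lambda_A(i,j)$ will follow from the uniqueness of isotypic components in a semisimple left $\ccS$-module: the $\ccS_{ij}$-isotypic part of $B$ sits inside the $\ccS_{ij}$-isotypic part of $A$.

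For part (i), I would take $\lambda\in\Lambda_B(i,j)$ and $\mu\in\Lambda_B(s,t)$. The case $j\ne s$ is immediate from Lemma~\ref{lem:^(i,j)}(i), so assume $j=s$. The key trick is to test $\lambda\mu$ against a single matrix coefficient: the elements $b_1=e_{11}^{ij}\otimes\lambda$ and $b_2=e_{11}^{jt}\otimes\mu$ both lie in $B$ (they sit in $V_{ij}\otimes\Lambda_B(i,j)$ and $V_{jt}\otimes\Lambda_B(j,t)$ respectively). Using the tensor multiplication rule, $b_1 b_2=e_{11}^{it}\otimes\lambda\mu\in B$. Projecting onto the $(i,t)$-component in the direct-sum decomposition of $B$ places $e_{11}^{it}\otimes\lambda\mu$ in $V_{it}\otimes\Lambda_B(i,t)$, and since $e_{11}^{it}$ is a basis vector of $V_{it}$ this forces $\lambda\mu\in\Lambda_B(i,t)$, which is exactly what I need. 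Part (ii) is the same argument applied to a mixed pair: take $\lambda\in\Lambda_A(i,j)$ and $\mu\in\Lambda_B(j,t)$ (and symmetrically), form $(e_{11}^{ij}\otimes\lambda)(e_{11}^{jt}\otimes\mu)\in AB\subseteq B$, and read off $\lambda\mu\in\Lambda_B(i,t)$ as before.

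I do not anticipate a substantive obstacle: the whole argument is unpacking Proposition~\ref{prop:Vij.^(i,j )  is an algebra} and the direct-sum nature of the $S$-decomposition. The only point requiring any care is the very first one --- making sure that $B$ really is a direct sum of the form $\bigoplus V_{ij}\otimes\Lambda_B(i,j)$ with the same $V_{ij}$ components as in $A$ --- since without that the bookkeeping collapses. Once the isotypic-uniqueness observation is in hand, the rest is a one-line matrix-unit computation in each bidegree.
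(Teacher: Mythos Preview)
Your proposal is correct and matches the paper's approach: the paper does not give a detailed proof but simply states that the result follows from Proposition~\ref{prop:Vij.^(i,j )  is an algebra}, which is precisely the structural input you unpack via the matrix-unit computation $(e_{11}^{ij}\otimes\lambda)(e_{11}^{jt}\otimes\mu)=e_{11}^{it}\otimes\lambda\mu$. Your extra care about the isotypic decomposition of $B$ inside $A$ is exactly the point that makes the one-line deduction legitimate.
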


\begin{defn}
\label{def:B is S-mod} We say that a subalgebra $B$ of $A$ is $S$\emph{-modgenerated}
if $SB+BS\subseteq B$ and $B$ is generated (as an algebra) by non-trivial
simple $S$-sub-bimodules of $B$.
\end{defn}

Let $B$ be a subalgebra of $A$ such that $SB+BS\subseteq B$ and
let $B_{S}$ be the subalgebra of $B$ generated by non-trivial simple
$S$-sub-bimodules of $B$. Then $B_{S}$ is a subalgebra of $A$
with $SB_{S}+B_{S}S\subseteq B_{S}$. Let $B_{S}=\bigoplus_{i,j\in\hat{I}}V_{ij}\otimes\Lambda_{B_{S}}(i,j)$
be the $S$-decomposition of $B_{S}$. Since $B_{S}$ is a subalgebra
of $B$, we have $\Lambda_{B_{S}}(i,j)\subseteq\Lambda_{B}(i,j)$
for all $i.j\in\hat{I}$. On the other hand, $B_{S}$ is generated
by all $V_{ij}\otimes\Lambda_{B}(i,j)$ with $(i,j)\neq(0,0)$, so
$\Lambda_{B_{S}}(i,j)=\Lambda_{B}(i,j)$ for all $(i,j)\ne(0,0)$.
We have the following proposition. 
\begin{prop}
\label{B_S} (i) $\Lambda_{B_{S}}(0,0)=\sum_{i\in I}\Lambda_{B_{S}}(0,i)\Lambda_{B_{S}}(i,0)$. 

(ii) $\Lambda_{B_{S}}(0,0)=\sum_{i\in I}\Lambda_{B}(0,i)\Lambda_{B}(i,0)$. 

(iii) $B_{S}$ is an ideal of $B$. 
\end{prop}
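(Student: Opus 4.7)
The plan is to work entirely within the $S$-decomposition of Section 2, using that $\Lambda_{B_S}$ is a subalgebra of $\Lambda_A$ by Proposition \ref{prop:^B sub ^A} and that, as noted in the discussion preceding the proposition, $B_S$ is generated as an algebra by the summands $V_{ij}\otimes\Lambda_B(i,j)$ with $(i,j)\ne(0,0)$, so $\Lambda_{B_S}(i,j)=\Lambda_B(i,j)$ for every $(i,j)\ne(0,0)$. The only unknown component is therefore $\Lambda_{B_S}(0,0)$; parts (i) and (ii) identify it, and part (iii) is a bookkeeping consequence.

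For (i), I would express every element of $\Lambda_{B_S}(0,0)$ as a linear combination of products $\lambda_1\cdots\lambda_k$ with $\lambda_r\in\Lambda_B(i_r,j_r)$ satisfying $j_r=i_{r+1}$, $(i_r,j_r)\ne(0,0)$, and $i_1=j_k=0$, and induct on $k$. Setting $i_{k+1}:=j_k=0$, let $r\ge 2$ be the smallest index with $i_r=0$; since $(i_1,j_1)=(0,i_2)\ne(0,0)$ forces $i_2\in I$, we actually have $r\ge 3$. The prefix $\lambda_1\cdots\lambda_{r-1}$ lies in $\Lambda_B(0,i_2)\Lambda_B(i_2,0)\subseteq\sum_{i\in I}\Lambda_{B_S}(0,i)\Lambda_{B_S}(i,0)$. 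If $r=k+1$ this is the base case; otherwise $\lambda_r\cdots\lambda_k$ is a shorter product in $\Lambda_{B_S}(0,0)$ to which the inductive hypothesis applies, and the concatenation remains in the target sum thanks to $\Lambda_{B_S}(i,0)\cdot\Lambda_{B_S}(0,0)\subseteq\Lambda_{B_S}(i,0)$. The reverse inclusion is immediate since $\Lambda_{B_S}$ is a subalgebra of $\Lambda_A$. Part (ii) then follows at once by substituting $\Lambda_{B_S}(0,i)=\Lambda_B(0,i)$ and $\Lambda_{B_S}(i,0)=\Lambda_B(i,0)$ for $i\in I$ into the formula from (i).

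For (iii) I would verify $B\cdot B_S\subseteq B_S$ (the other side being symmetric) block by block. Writing $b=\sum b_{ij}$ and $x=\sum x_{st}$ in the $S$-decomposition, each nonzero product $b_{ij}x_{jt}$ sits in the $(i,t)$-block $V_{it}\otimes\Lambda_B(i,t)$. If $(i,t)\ne(0,0)$, this equals $V_{it}\otimes\Lambda_{B_S}(i,t)\subseteq B_S$. For $(i,t)=(0,0)$ with $j\ne 0$, I would use $\Lambda_B(0,j)=\Lambda_{B_S}(0,j)$ and $\Lambda_B(j,0)=\Lambda_{B_S}(j,0)$ to obtain $b_{0j}x_{j0}\in V_{00}\otimes\Lambda_{B_S}(0,j)\Lambda_{B_S}(j,0)\subseteq V_{00}\otimes\Lambda_{B_S}(0,0)$. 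For $(i,t)=(0,0)$ with $j=0$, I would apply (ii) to write $x_{00}\in V_{00}\otimes\sum_i\Lambda_B(0,i)\Lambda_B(i,0)$ and associate as $b_{00}\cdot\alpha_i\beta_i=(b_{00}\alpha_i)\beta_i\in V_{00}\otimes\Lambda_B(0,i)\Lambda_B(i,0)\subseteq V_{00}\otimes\Lambda_{B_S}(0,0)$.

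I expect the inductive bookkeeping in (i) --- specifically isolating the first return of the index sequence to $0$ after the forced excursion into $I$ --- to be the main technical step. Once (i) is in place, (ii) is a one-line substitution, and (iii) unravels essentially formally from the block multiplication, with the $(0,0)$-$(0,0)$ case being exactly what (ii) is designed to handle.
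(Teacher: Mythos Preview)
Your proposal is correct and follows essentially the same route as the paper: work in the $S$-decomposition, identify $\Lambda_{B_S}(0,0)$ by analysing products of generators, then deduce (ii) by the substitution $\Lambda_{B_S}(i,j)=\Lambda_B(i,j)$ for $(i,j)\ne(0,0)$, and verify (iii) block by block with the $(0,0)$ case handled via (i)/(ii). The only difference is that your induction on $k$ in (i), splitting at the first return to $0$, is more elaborate than necessary: since $\Lambda_{B_S}$ is already a subalgebra, the paper simply splits after the \emph{first} factor, observing that $\lambda_2\cdots\lambda_k\in\Lambda_{B_S}(j_1,0)$ directly, so $\lambda_1\cdots\lambda_k\in\Lambda_{B_S}(0,j_1)\Lambda_{B_S}(j_1,0)$ in one step with no induction.
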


\begin{proof}
(i) Let $B'_{S}=\bigoplus_{(i,j)\neq(0,0)}V_{ij}\otimes\Lambda_{B_{S}}(i,j)$.
Then $B_{S}$ is generated by $B'_{S}$. Choose any product $\Pi=V_{i_{1}j_{1}}\otimes\Lambda_{B_{S}}(i_{1},j_{1})\ldots V_{i_{k}j_{k}}\otimes\Lambda_{B_{S}}(i_{k},j_{k})$,
where $k\geq2$, in $B_{S}$. Suppose that $\Pi$ is not in $B_{S}$.
Then by Proposition \ref{prop:Vij.^(i,j )  is an algebra}, $j_{1}=i_{2}$,
$j_{2}=i_{3}$, $\ldots$, $j_{k-1}=i_{k}$ and $i_{1}=j_{k}=0$,
so $j_{1}\neq0$. Therefore, by using Proposition \ref{prop:Vij.^(i,j )  is an algebra},
we obtain 
\begin{eqnarray*}
\Pi & = & (V_{0j_{1}}\otimes\Lambda_{B_{S}}(0,j_{1}))((V_{j_{1}j_{2}}\otimes\Lambda_{B_{S}}(j_{1},j_{2})\ldots(V_{j_{k-1}0}\otimes\Lambda_{B_{S}}(j_{k-1},0))\\
 & \subseteq & (V_{0j_{1}}\otimes\Lambda_{B_{S}}(0,j_{1})(V_{j_{1}0}\otimes\Lambda_{B_{S}}(j_{1},0)\subseteq V_{00}\otimes\Lambda_{B_{S}}(0,j_{1})\Lambda_{B_{S}}(j_{1},0).
\end{eqnarray*}
Hence, $\Lambda_{B_{S}}(0,0)=\sum_{i\in I}\Lambda_{B_{S}}(0,i)\Lambda_{B_{S}}(i,0)$,
as required. 

(ii) Follows from (i) since $\Lambda_{B_{S}}(i,j)=\Lambda_{B}(i,j)$
for all $(i,j)\ne(0,0)$. 

(iii) We need to show that $\Lambda_{B}\Lambda_{B_{S}}\subseteq\Lambda_{B_{S}}$
(the case $\Lambda_{B_{S}}\Lambda_{B}\subseteq\Lambda_{B_{S}}$ is
similar). Since $\Lambda_{B_{S}}(i,j)=\Lambda_{B}(i,j)$ for all $(i,j)\ne(0,0)$
and $\Lambda_{B_{S}}$ is an algebra, we have $\Lambda_{B}(i,j)\Lambda_{B_{S}}\subseteq\Lambda_{B_{S}}$
for all $(i,j)\neq(0,0)$. It remain to note that $\Lambda_{B}(0,0)\Lambda_{B_{S}}\subseteq\Lambda_{B_{S}}$.
Indeed, for all $i\in I$ we have 
\[
\Lambda_{B}(0,0)\Lambda_{B_{S}}(0,i)\subseteq\Lambda_{B}(0,0)\Lambda_{B}(0,i)\subseteq\Lambda_{B}(0,i)=\Lambda_{B_{S}}(0,i).
\]
Therefore, $\Lambda_{B_{S}}$ is an ideal of $\Lambda_{B}$. 
\end{proof}
\begin{prop}
\label{prop:(i) =00003D (ii) =00003D (iii)} The following are equivalent

(i) $A$ is $S$-modgenerated. 

(ii) $\Lambda_{A}(0,0)=\sum_{i\in I}\Lambda_{A}(0,i)\Lambda_{A}(i,0)$. 

(iii) The ideal of $A$ generated by $S$ coincides with $A$. 
\end{prop}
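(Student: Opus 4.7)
The plan is to prove the chain $(i)\Leftrightarrow(ii)\Leftrightarrow(iii)$ by exploiting the $S$-decomposition and Proposition~\ref{B_S} for the first equivalence, and a direct computation of the ideal generated by $S$ for the second. Throughout I would write $J$ for the (two-sided) ideal of $A$ generated by $S$; in the non-unital setting one has $J = S + SA + AS + ASA$.

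For $(i)\Rightarrow(ii)$: if $A$ is $S$-modgenerated then $A = A_S$ in the notation preceding Proposition~\ref{B_S}. Applying part~(ii) of that proposition with $B = A$ immediately yields $\Lambda_A(0,0) = \sum_{i \in I}\Lambda_A(0,i)\Lambda_A(i,0)$. For $(ii)\Rightarrow(i)$: consider the subalgebra $A_S$ generated by the non-trivial simple $S$-sub-bimodules. As noted before Proposition~\ref{B_S}, $\Lambda_{A_S}(i,j) = \Lambda_A(i,j)$ for all $(i,j)\neq(0,0)$; combining Proposition~\ref{B_S}(ii) with the hypothesis $(ii)$ gives $\Lambda_{A_S}(0,0) = \sum_{i\in I}\Lambda_A(0,i)\Lambda_A(i,0) = \Lambda_A(0,0)$. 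Hence $A_S = A$ in the $S$-decomposition, i.e.\ $A$ is $S$-modgenerated.

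For $(ii)\Leftrightarrow(iii)$: I would compute the $S$-decomposition of each summand of $J = S + SA + AS + ASA$ using Lemma~\ref{lem:^(i,j)}(iv)--(v) and Proposition~\ref{prop:Vij.^(i,j )  is an algebra}. Since $S_i = V_{ii}\otimes \mathbf{1}_i$ acts as the identity on $\Lambda_A(i,k)$ on the left, a short check gives
\[
SA = \bigoplus_{i\in I,\ k\in\hat I} V_{ik}\otimes \Lambda_A(i,k),\qquad AS = \bigoplus_{k\in\hat I,\ i\in I} V_{ki}\otimes \Lambda_A(k,i),
\]
and similarly
\[
ASA = \bigoplus_{j,n\in\hat I} V_{jn}\otimes \Bigl(\sum_{i\in I}\Lambda_A(j,i)\Lambda_A(i,n)\Bigr).
\]
Every summand of the $S$-decomposition of $A$ indexed by $(i,j)\neq(0,0)$ already appears in $SA+AS$, so the sole obstruction to $J = A$ is the $V_{00}$-component: the piece of $V_{00}\otimes\Lambda_A(0,0)$ contained in $J$ is precisely $V_{00}\otimes \sum_{i\in I}\Lambda_A(0,i)\Lambda_A(i,0)$, coming from $ASA$. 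Thus $J = A$ if and only if $\Lambda_A(0,0) = \sum_{i\in I}\Lambda_A(0,i)\Lambda_A(i,0)$, which is~$(ii)$.

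The argument is essentially book-keeping in the $S$-decomposition, and the only step that requires genuine care is the last one: one must verify that every potential contribution from $ASA$ to components $V_{jn}$ with $(j,n)\neq(0,0)$ was already visible in $SA+AS$, so that the whole question reduces to the single component $V_{00}\otimes\Lambda_A(0,0)$. Once that reduction is in place, the equivalence with~$(ii)$ is immediate.
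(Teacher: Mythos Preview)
Your argument is correct. The equivalence $(i)\Leftrightarrow(ii)$ is handled exactly as in the paper, via Proposition~\ref{B_S} applied with $B=A$ (and your citation of part~(ii) of that proposition is in fact the right one).

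For $(ii)\Leftrightarrow(iii)$ you take a genuinely different route from the paper. You compute the $S$-decomposition of the ideal $J=S+SA+AS+ASA$ directly and observe that every component $V_{jn}\otimes\Lambda_A(j,n)$ with $(j,n)\neq(0,0)$ already lies in $SA+AS$, while the $(0,0)$-component of $J$ is exactly $V_{00}\otimes\sum_{i\in I}\Lambda_A(0,i)\Lambda_A(i,0)$; this reduces $J=A$ to condition~$(ii)$. The paper instead avoids this computation entirely: it notes that $A_S$ is an ideal of $A$ (Proposition~\ref{B_S}(iii)) containing $S$, so the ideal $T$ generated by $S$ satisfies $T\subseteq A_S$, while trivially $A_S\subseteq T$; hence $T=A_S$, and $(iii)$ becomes equivalent to $A=A_S$, i.e.\ to $(i)$. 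Your approach is more explicit and self-contained (it does not invoke that $A_S$ is an ideal), whereas the paper's is shorter and conceptually cleaner once Proposition~\ref{B_S}(iii) is available. Both are valid; the only point you should make sure is fully justified is that $\mathbf{1}_i\lambda=\lambda$ for $\lambda\in\Lambda_A(i,k)$, which follows from the identification $S_i=V_{ii}\otimes\mathbf{1}_i$ and the fact that $\mathbf{1}_{S_i}$ acts as the identity on $V_{ik}\otimes\Lambda_A(i,k)$.
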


\begin{proof}
Let $A_{S}$ be the subalgebra of $A$ generated by non-trivial irreducible
$S$-sub-bimodules of $A$. 

$(i)\Rightarrow(ii)$: Suppose that $A$ is $S$-modgenerated. Then
$A=A_{S}$, so $(ii)$ follows from Proposition \ref{B_S}(ii). 

$(ii)\Rightarrow(i)$: Suppose $(ii)$ holds. Then by Proposition
\ref{B_S}(iii), 
\[
\Lambda_{A_{S}}(0,0)=\sum_{i\in I}\Lambda_{A}(0,i)\Lambda_{A}(i,0)=\Lambda_{A}(0,0).
\]
Therefore, $A_{S}=A$, as required. 

$(ii)\Leftrightarrow(iii)$: Note that the ideal $T$ of $A$ generated
by $S$ contains $A_{S}$. On the other hand, by Proposition \ref{B_S}(i),
$A_{S}$ is an ideal of $A$ containing $S,$ so $T\subseteq A_{S}$.
Therefore, $T=A_{S}$, as required. 
\end{proof}
\begin{defn}
We say that $A$ is \emph{$M_{k}$-modgenerated} if $A$ is $S$-modgenerated
with $S$ isomorphic to the matrix algebra $M_{k}(\bbF)$. 
\end{defn}

\begin{prop}
Suppose that $A$ is $S$-modgenerated and $S$ is $1$-perfect (resp.
$4$-perfect). Then $A$ is $M_{2}$-modgenerated (resp. $M_{3}$-modgenerated). 
\end{prop}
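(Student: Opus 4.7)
The plan is to exhibit a subalgebra $S'\subseteq A$ with $S'\cong M_{k}(\bbF)$, where $k=2$ in the $1$-perfect case and $k=3$ in the $4$-perfect case, and then invoke Proposition \ref{prop:(i) =00003D (ii) =00003D (iii)} to finish. More precisely, once such an $S'$ is in hand, showing that the two-sided ideal of $A$ generated by $S'$ is all of $A$ will, by that proposition applied with $S$ replaced by $S'$, yield that $A$ is $S'$-modgenerated, i.e.\ $M_{k}$-modgenerated in the sense of the preceding definition.

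In either case the perfectness hypothesis forces $n_{i}\ge k$ for every simple component $S_{i}=M_{n_{i}}$ of $S$ ($n_{i}\ge 2$ when $S$ is $1$-perfect, $n_{i}\ge 3$ when $S$ is $4$-perfect). In each $S_{i}$ I would fix a complete set of matrix units $\{e_{ab}^{i}:1\le a,b\le n_{i}\}$ and set $T_{i}=\mathrm{span}\{e_{ab}^{i}:1\le a,b\le k\}\cong M_{k}$. The natural diagonal embedding
\[
S':=\Bigl\{\,\sum_{i\in I}m_{i}:m\in M_{k}\,\Bigr\}\subseteq\bigoplus_{i\in I}T_{i}\subseteq S,
\]
where $m\mapsto m_{i}$ is the isomorphism $M_{k}\to T_{i}$ sending the $(a,b)$ matrix unit to $e_{ab}^{i}$, is a subalgebra of $S$ (and hence of $A$) isomorphic to $M_{k}$, since every component projection $S'\to T_{i}$ is an algebra isomorphism.

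Let $T$ be the ideal of $A$ generated by $S'$. Because $A$ is $S$-modgenerated, Proposition \ref{prop:(i) =00003D (ii) =00003D (iii)} gives that the ideal of $A$ generated by $S$ equals $A$, so it is enough to prove $S\subseteq T$. For each $i\in I$ the primitive central idempotent $\mathbf{1}_{S_{i}}$ lies in $S\subseteq A$, and a direct computation shows $\mathbf{1}_{S_{i}}S'\mathbf{1}_{S_{i}}=T_{i}$ (the $i$-th block of the diagonal), so $T_{i}\subseteq T$. Since $S_{i}$ is simple and $T_{i}\ne 0$, the two-sided ideal $S_{i}T_{i}S_{i}$ of $S_{i}$ equals $S_{i}$; as $S_{i}\subseteq A$, this gives $S_{i}\subseteq T$. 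Summing over $i$ yields $S=\bigoplus_{i\in I}S_{i}\subseteq T$, hence $T=A$, as required.

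No substantial obstacle is anticipated: the whole argument reduces to the observation that the perfectness bound is precisely what permits a simultaneous diagonal embedding of $M_{k}$ into every simple component of $S$, together with the characterization of modgeneration via ideal generation supplied by Proposition \ref{prop:(i) =00003D (ii) =00003D (iii)} and the simplicity of each $S_{i}$.
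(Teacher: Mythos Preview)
Your argument is correct and follows essentially the same route as the paper's proof: the paper also picks in each simple component $S_{i}$ a copy $P_{i}\cong M_{k}$, forms a diagonal subalgebra $P\subseteq S$ with $P\cong M_{k}$ projecting onto each $P_{i}$, observes that $S$ is generated by $P$ as an ideal, and then uses Proposition~\ref{prop:(i) =00003D (ii) =00003D (iii)} to pass from ``$A$ is generated by $S$ as an ideal'' to ``$A$ is generated by $P$ as an ideal''. Your version simply makes explicit both the construction of the diagonal embedding and the simplicity argument showing $S\subseteq T$, which the paper leaves as one-line assertions.
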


\begin{proof}
Suppose that $S$ is $1$-perfect (the case of $4$-perfect $S$ is
similar). Let $\{S_{i}\mid i\in I\}$ be the set of the simple components
of $S$. Since $S$ is $1$-perfect, each $S_{i}$ contains a subalgebra
$P_{i}\cong M_{2}(\bbF)$. Fix any subalgebra $P$ of $S$ such that
$P\cong M_{2}(\bbF)$ and the projection of $P$ onto $S_{i}$ is
$P_{i}$. Then $S$ is generated by $P$ as an ideal. Since $A$ is
$S$-modgenerated, by Proposition \ref{prop:(i) =00003D (ii) =00003D (iii)},
$A$ is generated by $S$ as an ideal. Therefore, $A$ is generated
by $P$ as an ideal, so $A$ is $M_{2}$-modgenerated. 
\end{proof}
\begin{prop}
\label{levi} Let $A$ be a perfect finite dimensional associative
algebra and let $S$ be a Levi subalgebra of $A$. Then $A$ is $S$-modgenerated. 
\end{prop}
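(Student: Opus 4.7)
The plan is to invoke Proposition \ref{prop:(i) =00003D (ii) =00003D (iii)}, which reduces the claim to showing that the two-sided ideal $I$ of $A$ generated by $S$ coincides with $A$. Everything then comes down to a short nilpotence argument in the quotient $A/I$.

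First I would appeal to the Wedderburn--Malcev theorem: since $S$ is a Levi subalgebra of the finite-dimensional algebra $A$, there is a vector-space decomposition $A = S \oplus R$ with $R = \rad A$. Because $S \subseteq I$, the natural map $A \to A/I$ kills $S$ and therefore factors through $R$, so $A/I$ is a homomorphic image of the nilpotent ideal $R$ and is itself a nilpotent associative algebra.

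Now I would invoke the perfectness hypothesis, which in the associative setting I read as $A^{2} = A$. Passing to the quotient gives $(A/I)^{2} = A/I$, and iterating yields $(A/I)^{2^{k}} = A/I$ for every $k \ge 1$. Combined with the nilpotence of $A/I$, this forces $A/I = 0$, so $I = A$ and Proposition \ref{prop:(i) =00003D (ii) =00003D (iii)} completes the proof.

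The only potential obstacle is the precise meaning attached to ``perfect'' for an associative algebra. If it were instead intended in the Lie-theoretic sense $A = [A,A]$, the argument carries over essentially verbatim: one has $[A/I, A/I] \subseteq (A/I)^{2}$, so $A/I = [A/I, A/I]$ still yields $(A/I)^{2} = A/I$, and the same nilpotence argument closes the proof. Either way, the conclusion is an easy consequence of the structural Proposition \ref{prop:(i) =00003D (ii) =00003D (iii)} together with the Wedderburn--Malcev decomposition, and I do not expect any genuine technical difficulty.
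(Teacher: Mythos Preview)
Your proposal is correct and follows essentially the same line as the paper's proof: both reduce via Proposition \ref{prop:(i) =00003D (ii) =00003D (iii)} to showing that the ideal $T$ generated by $S$ equals $A$, observe that $A/T$ is nilpotent because $S$ is a Levi subalgebra, and then use perfectness to force $A/T=0$. Your version is simply more explicit (spelling out Wedderburn--Malcev and the iteration $(A/I)^{2^{k}}=A/I$), and your remark on the possible meanings of ``perfect'' is a reasonable precaution, but there is no substantive difference from the paper's argument.
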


\begin{proof}
Let $T$ be an ideal of $A$ generated by $S$. By Proposition \ref{prop:(i) =00003D (ii) =00003D (iii)},
we need to show that $A=T$. Since $S$ is Levi subalgebra of $A$
and $S\subseteq T$, $A/T$ is nilpotent. As $A$ is perfect, $A/T=0$,
so $T=A$, as required.
\end{proof}

\section{Lie Structure of $A$}

Throughout this section, $A$ is an associative algebra, $S$ is a
finite dimensional semisimple subalgebra of $A$ such that $A$ is
$S$-modgenerated, $A=\bigoplus_{i,j\in\hat{I}}V_{ij}\otimes\Lambda_{A}(i,j)$
is the $S$-decomposition for $A$, $\{S_{i}\mid i\in I\}$ is the
set of the simple components of $S$, each $S_{i}$ is identified
with $M_{n_{i}}(\bbF)$, $B$ is a subalgebra of $A$ such that $BS+SB\subseteq B$,
$B_{S}$ is the subalgebra of $B$ generated by all non-trivial irreducible
$S$-sub-bimodules of $B$, $A=\bigoplus_{i,j\in\hat{I}}V_{ij}\otimes\Lambda_{A}(i,j)$
(resp. $B_{S}=\bigoplus_{i,j\in\hat{I}}V_{ij}\otimes\Lambda_{B_{S}}(i,j)$)
is the $S$-decomposition of $A$ (resp. $B_{S}$), $\{e_{st}^{ij}\mid1\leq s\leq n_{i},\ 1\le t\leq n_{j}\}$
(or simply $e_{st}$) is the standard basis of $V_{ij}$ consisting
of the matrix units and $S=\bigoplus_{i\in I}V_{ii}\otimes\mathbf{1}_{i}$.
In this section we suppose that $S$ is $1$-perfect if $p\ne2$ or
$4$-perfect if $p=2$. In particular, $n_{i}\ge2$ for all $i\in I$
and $n_{0}=1$. 

We define $V'_{i,j}$ ($i,j\in\hat{I}$) as follows: 
\begin{equation}
V'_{ij}=\begin{cases}
\{X\in V_{ii}\mid tr(X)=0\}\qquad, & \text{ if }i=j\\
V_{ij}, & \text{ if }i\neq j
\end{cases}.\label{eq:W'ij}
\end{equation}

Throughout this section, $L$ is the Lie algebra of $A$ generated
by all $V'_{ij}\otimes\Lambda_{A}(i,j)$ and $Q:=[S,S]=\bigoplus_{i\in I}Q_{i}\subseteq L$
where $Q_{i}=V'_{ii}\otimes\mathbf{1}_{i}$ are ideals of the Lie
algebra $Q$. Note that each $Q_{i}\cong sl_{n_{i}}(\bbF$) is simple
if $p=0$ or $p\not|n_{i}$. In the case $p|n_{i}$, the algebra $Q_{i}$
is \emph{quasisimple}, i.e. $Q_{i}$ is perfect and $Q_{i}/Z(Q_{i})$
is simple (here we use that $n_{i}\ge3$ if $p=2$). 
\begin{prop}
\label{prop:L is perfect} $L$ is perfect. 
\end{prop}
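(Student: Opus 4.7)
The plan is to show that $V'_{ij} \otimes \Lambda_A(i,j) \subseteq [L,L]$ for every generator of $L$; since $[L,L]$ is a Lie ideal, hence a subalgebra, of $L$, this forces $L \subseteq [L,L]$ and so $L = [L,L]$. The case $(i,j)=(0,0)$ is vacuous because $V_{00}$ is $1$-dimensional, whence $V'_{00}=0$. The main computational input is the product rule $(X \otimes \lambda)(Y \otimes \mu) = XY \otimes \lambda\mu$ from Proposition~\ref{prop:Vij.^(i,j )  is an algebra}, specialised to the identity
\[
[Y \otimes \lambda,\ Z \otimes \mathbf{1}_j] = [Y,Z] \otimes \lambda \qquad (Y, Z \in V_{jj},\ \lambda \in \Lambda_A),
\]
which holds because $\mathbf{1}_j$ is the identity of $\Lambda_A(j,j)$.

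For the diagonal blocks $V'_{ii} \otimes \Lambda_A(i,i)$ with $i \in I$, the hypothesis on $S$ is tailored precisely so that each $Q_i \cong \mathfrak{sl}_{n_i}(\bbF)$ is perfect: this is exactly the quasisimplicity statement already recorded in the text (perfectness fails only for $n_i=1$, or for $n_i=2$ with $p=2$, neither of which occurs under our hypothesis). Hence any trace-zero $X \in V'_{ii}$ may be written as $\sum_k [Y_k, Z_k]$ with $Y_k, Z_k \in V'_{ii}$, and the displayed identity then yields
\[
X \otimes \lambda \;=\; \sum_k [Y_k \otimes \lambda,\ Z_k \otimes \mathbf{1}_i] \;\in\; [L,L].
\]

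For the off-diagonal blocks ($i \neq j$, $(i,j) \neq (0,0)$), it suffices to realise each matrix-unit tensor $e_{su}^{ij} \otimes \lambda$ as a single bracket. When $i \in I$, pick $t \in \{1,\dots,n_i\} \setminus \{s\}$ (available because $n_i \geq 2$); then $e_{st}^{ii}$ is trace-zero, so $e_{st}^{ii} \otimes \mathbf{1}_i \in Q_i \subseteq L$, and
\[
[e_{st}^{ii} \otimes \mathbf{1}_i,\ e_{tu}^{ij} \otimes \lambda] \;=\; e_{su}^{ij} \otimes \lambda,
\]
the reverse product vanishing because $V_{ij}V_{ii}=0$. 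The remaining case $i=0$, $j \in I$ is symmetric: bracket $e_{1t}^{0j} \otimes \lambda$ against $e_{tu}^{jj} \otimes \mathbf{1}_j$ for some $t \in \{1,\dots,n_j\} \setminus \{u\}$.

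The whole argument is essentially bookkeeping inside the explicit tensor model for $A$; the hypothesis on $S$ is spent in exactly one place, simultaneously guaranteeing the perfectness of each $\mathfrak{sl}_{n_i}$ and the availability of the auxiliary index $t$. For that reason I do not expect any genuine obstacle.
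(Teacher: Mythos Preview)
Your argument is correct and matches the paper's proof in substance: both show each generator $V'_{ij}\otimes\Lambda_A(i,j)$ lies in $[L,L]$ by bracketing against $Q_i\subseteq L$, using perfectness of $Q_i\cong\mathfrak{sl}_{n_i}$ for the diagonal case and the nontriviality of the natural module for the off-diagonal case. The only difference is cosmetic---the paper phrases the off-diagonal step as $[Q_i,V'_{ij}\otimes\lambda]=V'_{ij}\otimes\lambda$ via the $Q_i$-module structure of $V_{ij}$, whereas you write out the same fact with explicit matrix units.
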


\begin{proof}
We need to show that $L\subseteq[L,L]$. It is enough to prove that
$V'_{ij}\otimes\Lambda_{A}(i,j)\subseteq[L,L]$ for all $(i,j)\neq(0,0)$.
Let $V'_{ij}\otimes\lambda\in V'_{ij}\otimes\Lambda_{A}(i,j)$. Suppose
that $i=j$. Since $Q\subseteq L$ and $V'_{ii}\cong Q_{i}$ as a
$Q_{i}$-module, we have 
\begin{eqnarray*}
[L,L] & \supseteq & [V'_{ii}\otimes\mathbf{1}_{i},V'_{ii}\otimes\lambda]=[Q_{i},V'_{ii}\otimes\lambda]=V'_{ii}\otimes\lambda.
\end{eqnarray*}
Suppose now that $i\neq j$. Since $(i,j)\neq(0,0)$, at least one
of the indices, say $i$, is non-zero. Since $V'_{ij}$ is isomorphic
to the direct sum of $n_{j}$ copies of $V_{i}$ as $Q_{i}$-module,
we have 
\begin{eqnarray*}
[L,L] & \supseteq & [V'_{ii}\otimes\mathbf{1}_{i},V'_{ij}\otimes\lambda]=[Q_{i},V'_{ij}\otimes\lambda]=V'_{ij}\otimes\lambda,
\end{eqnarray*}
as required. Therefore, $L$ is perfect.
\end{proof}

\begin{lem}
\label{lem:Vii * ^(i,i) } $[V_{ii}\otimes\Lambda_{A}(i,i),V_{ii}\otimes\Lambda_{A}(i,i)]\subseteq L$
for all $i\in I$. 
\end{lem}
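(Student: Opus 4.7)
The plan is to split $V_{ii}=V'_{ii}\oplus\bbF e_{11}$; this is a genuine direct sum in every characteristic because the matrix unit $e_{11}$ has trace $1$. By bilinearity of the bracket the desired inclusion then reduces to three sub-cases: brackets of two elements of $V'_{ii}\otimes\Lambda_{A}(i,i)$, which lie in $L$ trivially from the definition of $L$; the bracket $[e_{11}\otimes\lambda,e_{11}\otimes\mu]=e_{11}\otimes[\lambda,\mu]$; and the mixed brackets $[V'_{ii}\otimes\Lambda_{A}(i,i),\,e_{11}\otimes\Lambda_{A}(i,i)]$.

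The heart of the argument will be an intermediate claim: $e_{aa}\otimes[\lambda,\mu]\in L$ for every index $a$ and every $\lambda,\mu\in\Lambda_{A}(i,i)$. To prove it I would fix some $b\ne a$ (available because $n_{i}\ge 2$); then $e_{ab},e_{ba}\in V'_{ii}$, so $e_{ab}\otimes\lambda,\,e_{ba}\otimes\mu\in L$, and a direct computation gives
\[
[e_{ab}\otimes\lambda,\,e_{ba}\otimes\mu]=e_{aa}\otimes\lambda\mu-e_{bb}\otimes\mu\lambda=e_{aa}\otimes[\lambda,\mu]+(e_{aa}-e_{bb})\otimes\mu\lambda.
\]
The bracket on the left lies in $L$; the second summand on the right lies in $V'_{ii}\otimes\Lambda_{A}(i,i)\subseteq L$ since $e_{aa}-e_{bb}\in V'_{ii}$; hence $e_{aa}\otimes[\lambda,\mu]\in L$.

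Granted this, the case $[e_{11}\otimes\lambda,e_{11}\otimes\mu]$ is immediate. For the mixed brackets I would expand any $X'\in V'_{ii}$ in the basis $\{e_{ab}:a\ne b\}\cup\{e_{11}-e_{kk}:2\le k\le n_{i}\}$ of $V'_{ii}$ and compute each commutator with $e_{11}\otimes\mu$ using the matrix multiplication rule. A short case analysis shows that each summand that arises is either zero, an off-diagonal matrix unit tensored with an element of $\Lambda_{A}(i,i)$ (and hence in $V'_{ii}\otimes\Lambda_{A}(i,i)\subseteq L$), or of the form $e_{11}\otimes[\lambda,\mu]$ (in $L$ by the intermediate claim).

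The main thing to get right is characteristic-uniformity: the naive splitting $V_{ii}=V'_{ii}\oplus\bbF I_{n_{i}}$ breaks down precisely when $p\mid n_{i}$, since then $I_{n_{i}}\in V'_{ii}$, and many standard trace/commutator identities invoke halving. Replacing $I_{n_{i}}$ by the matrix unit $e_{11}$ sidesteps the first issue, and the commutator identity above introduces no division, so the argument will apply uniformly in every characteristic (in particular in the $p=2$, $n_{i}\ge 3$ case allowed by the standing hypotheses).
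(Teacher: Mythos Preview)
Your argument is correct and in fact cleaner than the paper's. The paper distinguishes two cases, $p\nmid n_i$ and $p\mid n_i$, using the complement $\bbF\varepsilon_i$ in the first and $\bbF e_{11}$ in the second; in each case it then establishes the analogue of your intermediate claim by summing an entire system of commutators (a cycle $[e_{12}\otimes\lambda,e_{21}\otimes\mu],\dots,[e_{n1}\otimes\lambda,e_{1n}\otimes\mu]$ in the first case, a star $[e_{1k}\otimes\mu,e_{k1}\otimes\lambda]$, $k=2,\dots,n$, together with $ne_{11}=0$ in the second). By contrast, you observe that $V_{ii}=V'_{ii}\oplus\bbF e_{11}$ holds uniformly in every characteristic and prove $e_{aa}\otimes[\lambda,\mu]\in L$ with a \emph{single} commutator $[e_{ab}\otimes\lambda,e_{ba}\otimes\mu]$, rewriting it as $e_{aa}\otimes[\lambda,\mu]+(e_{aa}-e_{bb})\otimes\mu\lambda$ and noting that $e_{aa}-e_{bb}\in V'_{ii}$. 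This eliminates the case split and the summation tricks entirely. The mixed case then reduces, via your basis $\{e_{ab}:a\neq b\}\cup\{e_{11}-e_{kk}\}$, to off-diagonal terms (already in $V'_{ii}\otimes\Lambda_A(i,i)\subseteq L$) and terms of the form $e_{11}\otimes[\lambda,\mu]$ covered by the claim. No division and no appeal to $p\mid n_i$ is needed anywhere, so the argument is both shorter and characteristic-uniform.
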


\begin{proof}
Denote by \textbf{$\mathbf{\varepsilon}_{i}$} the identity matrix
in $V_{ii}$. Let\noun{ }$X\otimes\lambda,Y\otimes\mu\in V_{ii}\otimes\Lambda_{A}(i,i)$.
We wish to show that $[X\otimes\lambda,Y\otimes\mu]\in L$. Recall
that $[L,L]=L$. We have two cases depending on whether $p$ divides
$n_{i}$ or not. 

Suppose first that $p$ does not divide $n_{i}$. In this case $\varepsilon_{i}\not\in L$
and $V_{ii}=V'_{ii}\oplus\bbF\varepsilon_{i}$. Let $X',Y'\in V'_{ii}$.
By linearity, it suffices to show that $[X'\otimes\lambda,Y'\otimes\mu]$,
$[X'\otimes\lambda,\varepsilon_{i}\otimes\mu]$ and $[\varepsilon_{i}\otimes\lambda,\varepsilon_{i}\otimes\mu]$
are in $L$. We have $[X'\otimes\lambda,Y'\otimes\mu]\in[L,L]=L$
and $[X'\otimes\lambda,\varepsilon_{i}\otimes\mu]=X'\otimes\lambda\mu-X'\otimes\mu\lambda\in L$,
as required. Note that $e_{rq}\in V'_{ii}$ for all $r\neq q$, so
$[e_{rq}\otimes\lambda,e_{qr}\otimes\mu]\in[L,L]=L$. Hence, we have
the following system of commutators in $L$:
\[
\begin{array}{ccc}
[e_{12}\otimes\lambda,e_{21}\otimes\mu] & = & e_{11}\otimes\lambda\mu-e_{22}\otimes\mu\lambda\in L,\\
{}[e_{23}\otimes\lambda,e_{32}\otimes\mu] & = & e_{22}\otimes\lambda\mu-e_{33}\otimes\mu\lambda\in L,\\
\vdots & \vdots & \vdots\\
{}[e_{n-1,n}\otimes\lambda,e_{n,n-1}\otimes\mu] & = & e_{n-1,n-1}\otimes\lambda\mu-e_{nn}\otimes\mu\lambda\in L,\\
{}[e_{n1}\otimes\lambda,e_{1n}\otimes\mu] & = & e_{nn}\otimes\lambda\mu-e_{11}\otimes\mu\lambda\in L
\end{array}
\]
where $n=n_{i}$. Since the sum of all expressions on the right equals
to $\varepsilon_{i}\otimes\lambda\mu-\varepsilon_{i}\otimes\mu\lambda=[\varepsilon_{i}\otimes\lambda,\varepsilon_{i}\otimes\mu]$
we have $[\varepsilon_{i}\otimes\lambda,\varepsilon_{i}\otimes\mu]\in L$,
as required. 

Now, suppose that $p$ divides $n_{i}$. In this case we have $\varepsilon_{i}\in V'_{ii}$
and $V_{ii}=V'_{ii}+\bbF e_{11}$. Let $X',Y'\in V'_{ii}$. By linearity,
it suffices to show that $[X'\otimes\lambda,Y'\otimes\mu]$, $[X'\otimes\lambda,e_{11}\otimes\mu]$
and $[e_{11}\otimes\lambda,e_{11}\otimes\mu]$ are in $L$. As before,
$[X'\otimes\lambda,Y'\otimes\mu]\in[L,L]=L$ and $[X'\otimes\lambda,e_{11}\otimes\mu]=X'e_{11}\otimes\lambda\mu-e_{11}X'\otimes\mu\lambda$.
Put $X'e_{11}=X_{1}+\beta e_{11}$ and $e_{11}X'=X_{2}+\beta e_{11}$,
where $X_{1},X_{2}\in V'_{ii}$. Then 
\[
[X'\otimes\lambda,e_{11}\otimes\mu]=X_{1}\otimes\lambda\mu+\beta e_{11}\otimes\lambda\mu-X_{2}\otimes\mu\lambda-\beta e_{11}\otimes\mu\lambda.
\]
Note that $X_{1}\otimes\lambda\mu,X_{2}\otimes\mu\lambda\in L$. Hence,
it remains to show that $e_{11}\otimes\lambda\mu-e_{11}\otimes\mu\lambda\in L$.
Recall that $e_{rq}\in V'_{ii}$ for all $r\neq q$. Hence, 
\[
[e_{rq}\otimes\lambda,e_{qr}\otimes\mu]\in[V'_{ii}\otimes\Lambda_{A}(i,i),V'\otimes\Lambda_{A}(i,i)]\subseteq[L,L]=L.
\]
Hence, we have the following system of commutators 

\[
\begin{array}{ccc}
[e_{12}\otimes\mu,e_{21}\otimes\lambda] & = & e_{11}\otimes\mu\lambda-e_{22}\otimes\lambda\mu\in L,\\
{}[e_{13}\otimes\mu,e_{31}\otimes\lambda] & = & e_{11}\otimes\mu\lambda-e_{33}\otimes\lambda\mu\in L,\\
\vdots & \vdots & \vdots\\
{}[e_{1,n}\otimes\mu,e_{n,1}\otimes\lambda] & = & e_{11}\otimes\mu\lambda-e_{nn}\otimes\lambda\mu\in L,
\end{array}
\]
where $n=n_{i}$. The sum of all these commutators $\stackrel[k=2]{n}{\sum}[e_{1k}\otimes\mu,e_{k1}\otimes\lambda]\in L$.
Since $p|n_{i}$, we have $ne_{11}=0$, so this sum is 
\[
\stackrel[k=2]{n}{\sum}(e_{11}\otimes\mu\lambda-e_{kk}\otimes\lambda\mu)=-e_{11}\otimes\mu\lambda+e_{11}\otimes\lambda\mu-\varepsilon_{i}\otimes\lambda\mu\in L.
\]
Hence, $e_{11}\otimes\lambda\mu-e_{11}\otimes\mu\lambda\in L$, as
required. 
\end{proof}
\begin{lem}
\label{lem:=00005BL,Vii * ^A(i,i)=00005D sub L} $[L,V_{ii}\otimes\Lambda_{A}(i,i)]\subseteq L$
for all $i\in I$. 
\end{lem}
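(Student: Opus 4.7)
The plan is to introduce the set $N = \{x \in L \mid [x, V_{ii} \otimes \Lambda_A(i,i)] \subseteq L\}$, which is a subspace of $L$, and show that $N = L$. Since $L$ is generated as a Lie algebra by the subspaces $V'_{jk} \otimes \Lambda_A(j,k)$ with $(j,k) \neq (0,0)$, it suffices to check (a) that $N$ is closed under the Lie bracket of $L$, and (b) that every such generator lies in $N$.

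Step (a) is a routine Jacobi argument: writing $W = V_{ii} \otimes \Lambda_A(i,i)$, for $x, y \in N$ one has
$$[[x,y], W] \subseteq [x, [y, W]] + [y, [x, W]] \subseteq [L, L] \subseteq L,$$
so $[x,y] \in N$ (noting that $[x,y]$ is already in $L$ by definition of $L$).

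For step (b) I would do a case analysis on $(j,k)$ relative to $i$, using $V_{jk} V_{st} = 0$ unless $k = s$. When both $j \neq i$ and $k \neq i$, the commutator $[V'_{jk} \otimes \Lambda_A(j,k), W]$ vanishes outright. When exactly one of $j, k$ equals $i$, only one of the two products in the commutator survives; it lies in $V_{ji} \otimes \Lambda_A(j,i)\Lambda_A(i,i)$ or $V_{ik} \otimes \Lambda_A(i,i)\Lambda_A(i,k)$ respectively. The key point is that the non-$i$ index being different from $i$ forces $V_{ji} = V'_{ji}$ (respectively $V_{ik} = V'_{ik}$), so what we get is precisely one of the generating subspaces of $L$ and thus sits inside $L$.

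The one case that does not reduce to this bookkeeping is $j = k = i$, where the required statement is $[V'_{ii} \otimes \Lambda_A(i,i), W] \subseteq L$. Since $V'_{ii} \subseteq V_{ii}$, this is immediate from Lemma \ref{lem:Vii * ^(i,i) }. This is where the real work of the argument has been carried out: the preceding lemma is where the $1$-perfect or $4$-perfect hypothesis on $S$ (guaranteeing $n_i \ge 2$, and $n_i \ge 3$ if $p = 2$) was used to produce enough commutators of off-diagonal matrix units to account for the identity-matrix part $\varepsilon_i \otimes \lambda\mu - \varepsilon_i \otimes \mu\lambda$. With that lemma in hand, the present lemma is essentially Jacobi plus a multiplication-table check, and I do not anticipate any further obstacle.
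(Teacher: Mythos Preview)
Your proof is correct, and the case analysis in step (b) matches the paper's exactly, including the appeal to Lemma~\ref{lem:Vii * ^(i,i) } for the diagonal case $j=k=i$. The only difference is in how you reduce to generators. You set up the normalizer $N=\{x\in L\mid [x,W]\subseteq L\}$ and use the Jacobi identity to show $N$ is a Lie subalgebra, hence equal to $L$ once it contains the generators. The paper instead observes that $L\subseteq A$ and that $A$ itself is just the direct sum $\bigoplus_{s,t} V_{st}\otimes\Lambda_A(s,t)$, so
\[
[L,W]\subseteq [A,W]=\sum_{s,t}[V_{st}\otimes\Lambda_A(s,t),\,V_{ii}\otimes\Lambda_A(i,i)],
\]
and then runs exactly your case analysis on each summand. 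This bypasses the Jacobi step entirely: no need to climb back up through iterated brackets when the ambient associative decomposition already linearizes everything. Your route is perfectly valid and is the natural one when an ambient linear decomposition is not available; here the paper's shortcut is available and slightly cleaner.
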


\begin{proof}
Note that 
\[
[L,V_{ii}\otimes\Lambda_{A}(i,i)]\subseteq[A,V_{ii}\otimes\Lambda_{A}(i,i)]=\bigoplus_{(i,j)\neq(0,0)}[V_{ij}\otimes\Lambda_{A}(i,j),V_{ii}\otimes\Lambda_{A}(i,i)]].
\]
Hence, we have two cases depending on the values of $i$ and $j$.
If $i=j$, then by Lemma \ref{lem:Vii * ^(i,i) }, $[V_{ii}\otimes\Lambda_{A}(i,i),V_{ii}\otimes\Lambda_{A}(i,i)]\subseteq L$.
Suppose that $i\neq j$. Then by using (\ref{eq:W'ij}) we obtain
\[
[V_{ij}\otimes\Lambda_{A}(i,j),V_{ii}\otimes\Lambda_{A}(i,i)]]\subseteq V_{ij}\otimes\Lambda_{A}(i,j)=V'_{ij}\otimes\Lambda_{A}(i,j)\subseteq L.
\]
Thus, $[L,V_{ii}\otimes\Lambda_{A}(i,i)]\subseteq L$.
\end{proof}
\begin{lem}
\label{lem:V00 * ^(0,0) } $[V_{00}\otimes\Lambda_{A}(0,0),V_{00}\otimes\Lambda_{A}(0,0)]\subseteq L$. 
\end{lem}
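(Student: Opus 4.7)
The plan is to reduce everything to brackets of elements that already lie in $L$, using the identity $\Lambda_{A}(0,0)=\sum_{i\in I}\Lambda_{A}(0,i)\Lambda_{A}(i,0)$ provided by Proposition \ref{prop:(i) =00003D (ii) =00003D (iii)} (valid because $A$ is $S$-modgenerated). Since $V_{00}$ is one-dimensional, every element of $V_{00}\otimes\Lambda_{A}(0,0)$ has the form $e_{11}^{00}\otimes\lambda$, and
\[
[e_{11}^{00}\otimes\lambda,\,e_{11}^{00}\otimes\mu]=e_{11}^{00}\otimes(\lambda\mu-\mu\lambda).
\]
Hence, by bilinearity, it suffices to show that $e_{11}^{00}\otimes(\lambda\mu-\mu\lambda)\in L$ whenever $\lambda=\alpha\beta$ with $\alpha\in\Lambda_{A}(0,i)$ and $\beta\in\Lambda_{A}(i,0)$ for some $i\in I$, and $\mu\in\Lambda_{A}(0,0)$ is arbitrary.

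I would then exploit the fact that for $i\in I$ the spaces $V_{0i}\otimes\Lambda_{A}(0,i)$ and $V_{i0}\otimes\Lambda_{A}(i,0)$ are contained in $L$, since $V'_{0i}=V_{0i}$ and $V'_{i0}=V_{i0}$ whenever $i\ne 0$. Using the row-times-column identities $e_{11}^{0i}e_{11}^{i0}=e_{11}^{00}$ and $e_{11}^{i0}e_{11}^{0i}=e_{11}^{ii}$ in $\mathbf{V}$, a direct computation yields the two commutators
\[
[e_{11}^{0i}\otimes\alpha,\,e_{11}^{i0}\otimes\beta\mu]=e_{11}^{00}\otimes\alpha\beta\mu-e_{11}^{ii}\otimes\beta\mu\alpha,
\]
\[
[e_{11}^{0i}\otimes\mu\alpha,\,e_{11}^{i0}\otimes\beta]=e_{11}^{00}\otimes\mu\alpha\beta-e_{11}^{ii}\otimes\beta\mu\alpha.
\]
Note that $\beta\mu\in\Lambda_{A}(i,0)$ and $\mu\alpha\in\Lambda_{A}(0,i)$ by Lemma \ref{lem:^(i,j)}(ii), so both commutators genuinely lie in $L$. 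Subtracting them, the $e_{11}^{ii}$-terms cancel and one obtains precisely $e_{11}^{00}\otimes(\lambda\mu-\mu\lambda)\in L$, completing the argument.

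The main obstacle, and the reason the proof has to be indirect, is that individual elements of $V_{00}\otimes\Lambda_{A}(0,0)$ are generally \emph{not} in $L$, so the bracket $[\lambda,\mu]$ cannot be read off from a single commutator; one has to form a difference of two carefully chosen $V_{0i}$–$V_{i0}$ commutators whose unwanted $V_{ii}$-components happen to agree and thus cancel. The $S$-modgenerated hypothesis is what makes this possible, as it is exactly what lets us express every element of $\Lambda_{A}(0,0)$ as a sum of products factoring through some $i\in I$, thereby bridging the trivial $(0,0)$-component to the non-trivial ones where $L$ is already well-understood.
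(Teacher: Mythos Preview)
Your argument is correct and is in fact slicker than the paper's. Both proofs start from Proposition~\ref{prop:(i) =00003D (ii) =00003D (iii)} to factor elements of $\Lambda_{A}(0,0)$ through some $i\in I$, but from there they diverge. The paper decomposes \emph{both} arguments of the bracket, writes $x_{0}=x_{L}+x_{1}$ and $y_{0}=y_{L}+y_{1}$ with $x_{L},y_{L}\in L$ and $x_{1},y_{1}\in V_{ii}\otimes\Lambda_{A}(i,i)$, expands $[x_{0},y_{0}]$ into four terms, and then invokes Lemma~\ref{lem:Vii * ^(i,i) } for $[x_{1},y_{1}]$ and Lemma~\ref{lem:=00005BL,Vii * ^A(i,i)=00005D sub L} for the cross terms. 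You instead decompose only one argument, $\lambda=\alpha\beta$, and engineer a direct cancellation: the two commutators $[e_{11}^{0i}\otimes\alpha,\,e_{11}^{i0}\otimes\beta\mu]$ and $[e_{11}^{0i}\otimes\mu\alpha,\,e_{11}^{i0}\otimes\beta]$ have identical $V_{ii}$-components $e_{11}^{ii}\otimes\beta\mu\alpha$, so their difference lies entirely in $V_{00}$ and equals exactly what is needed. This bypasses Lemmas~\ref{lem:Vii * ^(i,i) } and~\ref{lem:=00005BL,Vii * ^A(i,i)=00005D sub L} altogether; in particular Lemma~\ref{lem:=00005BL,Vii * ^A(i,i)=00005D sub L}, which the paper uses only here, would become redundant under your approach. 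The paper's route is more structural (splitting off an $L$-part and a $V_{ii}$-part), while yours is a sharper computational trick exploiting associativity of $\Lambda_{A}$ to make the unwanted $V_{ii}$-terms coincide.
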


\begin{proof}
Let $X\otimes\lambda,Y\otimes\mu\in V_{00}\otimes\Lambda_{A}(0,0)$.
By Proposition \ref{prop:(i) =00003D (ii) =00003D (iii)}, we have
\begin{equation}
\Lambda_{A}(0,0)=\sum_{i\in I}\Lambda_{A}(0,i)\Lambda_{A}(i,0).\label{eq:W00-1}
\end{equation}
Hence, 
\begin{eqnarray*}
V_{00}\otimes\Lambda_{A}(0,0) & = & V_{00}\otimes\sum_{i\in I}\Lambda_{A}(0,i)\Lambda_{A}(i,0)\\
 & = & \sum_{i\in I}(V_{0i}\otimes\Lambda_{A}(0,i))(V_{i0}\otimes\Lambda_{A}(i,0).
\end{eqnarray*}
Thus, $X\otimes\lambda=\sum_{i\in I}(X_{0i}\otimes\lambda_{0i})(X_{i0}\otimes\lambda_{i0})$
and $Y\otimes\mu=\sum_{i\in I}(Y_{0i}\otimes\mu_{0i})(Y_{i0}\otimes\mu_{i0})$,
where $X_{0i}\otimes\lambda_{0i},Y_{0i}\otimes\mu_{0i}\in V_{0i}\otimes\Lambda_{A}(0,i)$
and $X_{i0}\otimes\lambda_{i0},Y_{i0}\otimes\mu_{i0}\in V_{i0}\otimes\Lambda_{A}(i,0)$.
By \noun{(\ref{eq:W'ij}), }we have $V_{ij}\otimes\Lambda_{A}(i,j)=V'_{ij}\otimes\Lambda_{A}(i,j)\subseteq L$
for all $i\neq j$, so 
\[
X_{0i}\otimes\lambda_{0i},X_{i0}\otimes\lambda_{0i},Y_{0i}\otimes\mu_{0i},Y_{i0}\otimes\mu_{i0}\in L.
\]

We wish to show that $[X\otimes\lambda,Y\otimes\mu]\in L$. Note that
\begin{eqnarray*}
[X\otimes\lambda,Y\otimes\mu] & = & \underset{(i,j)\neq(0,0)}{\sum}[(X_{0i}\otimes\lambda_{0i})(X_{i0}\otimes\lambda_{i0}),(Y_{0j}\otimes\mu_{0j})(Y_{j0}\otimes\mu_{j0})]\\
 & = & \underset{(i,j)\neq(0,0)}{\sum}[X_{0i}X_{i0}\otimes\lambda_{0i}\lambda_{i0},Y_{0j}Y_{j0}\otimes\mu_{0j}\mu_{j0}],
\end{eqnarray*}
so without loose of generality it is enough to show that 
\[
[X_{01}X_{10}\otimes\lambda_{01}\lambda_{10},Y_{01}Y_{10}\otimes\mu_{01}\mu_{10}]\in L.
\]
Put $x_{0}=X_{01}X_{10}\otimes\lambda_{01}\lambda_{10}$ and $y_{0}=Y_{01}Y_{10}\otimes\mu_{01}\mu_{10}$.
Then we need to show that $[x_{0},y_{0}]\in L$. Note that $x_{0},y_{0}\in V_{00}\otimes\Lambda_{A}(0,0)$.
Let 
\[
x_{L}=[X_{01}\otimes\lambda_{01},X_{10}\otimes\lambda_{10}]=X_{01}X_{10}\otimes\lambda_{01}\lambda_{10}-X_{10}X_{01}\otimes\lambda_{10}\lambda_{01}=x_{0}-x_{1},
\]
where $x_{1}=X_{10}X_{01}\otimes\lambda_{10}\lambda_{01}$ and 
\[
y_{L}=[Y_{01}\otimes\mu_{01},Y_{10}\otimes\mu_{10}]=Y_{01}Y_{10}\otimes\mu_{01}\mu_{10}-Y_{10}Y_{01}-\mu_{10}\mu_{01}=y_{0}-y_{1},
\]
where $y_{1}=Y_{10}Y_{01}\otimes\mu_{10}\mu_{01}$. Then $x_{L},y_{L}\in[L,L]=L$,
and $x_{1},y_{1}\in V_{11}\otimes\Lambda_{A}(1,1)$. Since $x_{0}=x_{L}+x_{1}$
and $y_{0}=y_{L}+y_{1}$, we have 
\begin{eqnarray}
[x_{0},y_{0}] & = & [x_{L}+x_{1},y_{L}+y_{1}]\nonumber \\
 & = & [x_{L},y_{L}]+[x_{L},y_{1}]+[x_{1},y_{L}]+[x_{1},y_{1}].\label{eq:=00005Bx0,y0=00005D-1}
\end{eqnarray}

To show that $[x_{0},y_{0}]\in L$, it suffices to show that each
term on the right side of (\ref{eq:=00005Bx0,y0=00005D-1}) is in
$L$. We have $[x_{L},y_{L}]\in[L,L]=L$. As $i\in I$, by Lemma \ref{lem:Vii * ^(i,i) },
$[x_{1},y_{1}]\in L$. Moreover, by Lemma \ref{lem:=00005BL,Vii * ^A(i,i)=00005D sub L},
$[L,V_{ii}\otimes\Lambda_{A}(i,i)]\subseteq L$, so $[x_{L},y_{1}],[x_{1},y_{L}]\in L$.
Therefore, $[x_{0},y_{0}]\in L$. Thus, $[X\otimes\lambda,Y\otimes\mu]\in L$
for all $i$, $j$, $s$ and $t$, as required. 
\end{proof}
By combining Lemma \ref{lem:Vii * ^(i,i) } and Lemma \ref{lem:V00 * ^(0,0) },
we obtain the following. 
\begin{prop}
\label{prop:Vii * ^(i,i) for all i in I hat} $[V_{ii}\otimes\Lambda_{A}(i,i),V_{ii}\otimes\Lambda_{A}(i,i)]\subseteq L$
for all $i\in\hat{I}$. 
\end{prop}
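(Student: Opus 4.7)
The plan is essentially to observe that the proposition is a packaging result: since $\hat{I} = I \cup \{0\}$, the index $i \in \hat{I}$ falls into exactly one of two mutually exclusive cases, and each case has already been treated by one of the preceding lemmas. So my proof would open by recalling this decomposition of $\hat{I}$ and then split accordingly.

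For $i \in I$, the containment $[V_{ii} \otimes \Lambda_A(i,i), V_{ii} \otimes \Lambda_A(i,i)] \subseteq L$ is exactly the statement of Lemma \ref{lem:Vii * ^(i,i) }, so nothing further is required. For the remaining index $i = 0$, I would invoke Lemma \ref{lem:V00 * ^(0,0) }, which gives $[V_{00} \otimes \Lambda_A(0,0), V_{00} \otimes \Lambda_A(0,0)] \subseteq L$ directly. Taking the union of these two cases yields the conclusion for every $i \in \hat{I}$.

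There is no real obstacle here, since all the actual work, namely the matrix-unit computations showing that the scalar-matrix commutators $[\varepsilon_i \otimes \lambda, \varepsilon_i \otimes \mu]$ and the zero-component commutators $[x_0, y_0]$ land in $L$, was absorbed into Lemmas \ref{lem:Vii * ^(i,i) } and \ref{lem:V00 * ^(0,0) }. The only thing to be a little careful about is not to overlook that the two lemmas together exhaust $\hat{I}$; a single sentence pointing out $\hat{I} = I \sqcup \{0\}$ suffices. I would therefore keep the proof to three or four lines, simply citing the two lemmas in the two cases.
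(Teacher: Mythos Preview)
Your proposal is correct and matches the paper's approach exactly: the paper simply states that the proposition follows by combining Lemma~\ref{lem:Vii * ^(i,i) } (the case $i\in I$) and Lemma~\ref{lem:V00 * ^(0,0) } (the case $i=0$), without writing out any further argument. Your observation that $\hat I = I \cup \{0\}$ exhausts the cases is precisely what is intended.
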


\begin{thm}
\label{thm:L=00003D=00005BA,A=00005D is perfect} $A^{(1)}$ is perfect
and $A=A^{(1)}A^{(1)}+A^{(1)}$. 
\end{thm}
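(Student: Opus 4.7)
The plan is to establish $A^{(1)} = L$, which together with Proposition \ref{prop:L is perfect} immediately yields that $A^{(1)}$ is perfect, and then to verify the decomposition $A = A^{(1)}A^{(1)} + A^{(1)}$ by inspecting each $S$-homogeneous component of $A$ separately.

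First I would prove $A^{(1)} = L$. The inclusion $L \subseteq A^{(1)}$ is immediate from Proposition \ref{prop:L is perfect}: since $L = [L,L]$, we get $L \subseteq [A,A] = A^{(1)}$. For the reverse inclusion, I would expand an arbitrary commutator using the $S$-decomposition; it suffices to show that $[V_{ij} \otimes \Lambda_A(i,j), V_{st} \otimes \Lambda_A(s,t)] \subseteq L$ for all $i,j,s,t \in \hat{I}$. By Proposition \ref{prop:Vij.^(i,j )  is an algebra}, the bracket is nonzero only when $j = s$ or $t = i$. If the result lands in some $V_{ab} \otimes \Lambda_A(a,b)$ with $a \neq b$, then $V_{ab} = V'_{ab}$, so the bracket sits in $L$ by construction. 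The remaining case reduces to brackets of the form $[V_{ij} \otimes \Lambda_A(i,j), V_{ji} \otimes \Lambda_A(j,i)]$, which land in $V_{ii} \otimes \Lambda_A(i,i) \oplus V_{jj} \otimes \Lambda_A(j,j)$. When $i \neq j$, both factors are already contained in $L$, so the bracket lies in $[L,L] = L$; when $i = j$ (including $i = 0$), the bracket lies in $L$ directly by Proposition \ref{prop:Vii * ^(i,i) for all i in I hat}.

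Next I would verify $A = A^{(1)}A^{(1)} + A^{(1)} = L \cdot L + L$ by checking each $V_{ij} \otimes \Lambda_A(i,j)$. When $i \neq j$, $V_{ij} \otimes \Lambda_A(i,j) = V'_{ij} \otimes \Lambda_A(i,j) \subseteq L$. For the $(0,0)$-component, Proposition \ref{prop:(i) =00003D (ii) =00003D (iii)} gives $\Lambda_A(0,0) = \sum_{k \in I} \Lambda_A(0,k)\Lambda_A(k,0)$, and hence
\[
V_{00} \otimes \Lambda_A(0,0) = \sum_{k \in I} \bigl(V_{0k} \otimes \Lambda_A(0,k)\bigr)\bigl(V_{k0} \otimes \Lambda_A(k,0)\bigr) \subseteq L \cdot L,
\]
each factor lying in $L$ since $k \neq 0$. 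For $i = j \in I$, using that $n_i \geq 2$, pick $k \neq r$ and factor the diagonal matrix unit as $e_{rr} = e_{rk} e_{kr}$, so that
\[
e_{rr} \otimes \lambda = (e_{rk} \otimes \lambda)(e_{kr} \otimes \mathbf{1}_i) \in L \cdot L,
\]
while the off-diagonal units $e_{rs} \otimes \lambda$ with $r \neq s$ lie in $V'_{ii} \otimes \Lambda_A(i,i) \subseteq L$. Thus $V_{ii} \otimes \Lambda_A(i,i) \subseteq L \cdot L + L$, completing the argument.

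I do not expect a serious obstacle here: the genuine technical content has already been absorbed by Proposition \ref{prop:Vii * ^(i,i) for all i in I hat}, where the characteristic-$2$ subtleties surrounding the scalar matrix $\varepsilon_i$ were dealt with using the $4$-perfectness hypothesis. What remains is bookkeeping of bracket cases in the $S$-decomposition, together with the observation that the hypothesis $n_i \geq 2$ is exactly what makes the factorization $e_{rr} = e_{rk} e_{kr}$ (with $k \neq r$) available in the diagonal-component step.
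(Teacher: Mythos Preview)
Your proposal is correct and follows essentially the same approach as the paper: both establish $L=A^{(1)}$ via the same case analysis on $[V_{ij}\otimes\Lambda_A(i,j),V_{st}\otimes\Lambda_A(s,t)]$ culminating in Proposition~\ref{prop:Vii * ^(i,i) for all i in I hat}, and then verify $A=LL+L$ componentwise using Proposition~\ref{prop:(i) =00003D (ii) =00003D (iii)} for the $(0,0)$ block. Your factorization $e_{rr}\otimes\lambda=(e_{rk}\otimes\lambda)(e_{kr}\otimes\mathbf{1}_i)$ for the diagonal block $i=j\in I$ is slightly more explicit than the paper's version, but the idea is identical.
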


\begin{proof}
We identify $A$ with $\bigoplus_{i,j\in\hat{I}}V_{ij}\otimes\Lambda_{A}(i,j)$.
By Lemma \ref{prop:Vij.^(i,j )  is an algebra}, $S=\bigoplus_{i\in I}S_{i}=\bigoplus_{i\in I}V_{ii}\otimes\mathbf{1}_{i}$
is a Levi subalgebra of $A$. As above we fix a matrix realization
$M_{n_{i}}(\bbF)$ for each simple component $S_{i}$ of $S$. Let
$L$ be a Lie subalgebra of $A$ generated by all $V'_{i,j}\otimes\Lambda_{A}(i,j)$
for $(i,j)\neq(0,0)$, where $V'_{ij}$ is defined in (\ref{eq:W'ij}).
Then by Proposition \ref{prop:L is perfect}, $L$ is perfect and
$Q=[S,S]=\bigoplus_{i\in I}V'_{ii}\otimes\mathbf{1}_{i}\subseteq L$.
We wish to show that $L=A^{(1)}$. Since $L$ is a perfect Lie subalgebra
of $A^{(-)}$, we have $L=L^{(1)}\subseteq A^{(1)}$. It remains to
show that $A^{(1)}\subseteq L$. Let $X\otimes\lambda\in V_{ij}\otimes\Lambda_{A}(i,j)$
and $Y\otimes\mu\in V_{st}\otimes\Lambda_{A}(s,t)$, where $i,j,s,t\in\hat{I}$.
We wish to show that 
\begin{equation}
[X\otimes\lambda,Y\otimes\mu]=XY\otimes\lambda\mu-YX\otimes\mu\lambda\in L.\label{eq:=00005BX,Y=00005D=00003DXY-YX}
\end{equation}

If $i\neq t$ and $j\neq s$, then obviously we have $[X\otimes\lambda,Y\otimes\mu]=0\in L$.
Suppose that $j=s$. First, consider the case when $i\neq t$. Then
$YX\otimes\mu\lambda=0$. By using the formula (\ref{eq:W'ij}) we
obtain $XY\otimes\lambda\mu\in V_{it}\otimes\Lambda_{A}(i,t)=V'_{it}\otimes\Lambda_{A}(i,t)\subseteq L$.
Therefore, $[X\otimes\lambda,Y\otimes\mu]=XY\otimes\lambda\mu\in L$. 

Now, suppose that $i=t$. If $i\neq j$, then by formula (\ref{eq:W'ij}),
$V_{ij}\otimes\Lambda_{A}(i,j)=V'_{ij}\otimes\Lambda_{A}(i,j)\subseteq L$
for all $i\neq j$. Hence, 
\[
[X\otimes\lambda,Y\otimes\mu]\in[V'_{ij}\otimes\Lambda_{A}(i,j),V'_{ji}\otimes\Lambda_{A}(j,i)]\subseteq[L,L]=L.
\]
Suppose that $i=j$. Then by Proposition \ref{prop:Vii * ^(i,i) for all i in I hat},
$[V_{ii}\otimes\Lambda_{A}(i,i),V_{ii}\otimes\Lambda_{A}(i,i)]\in L$
for all $i\in\hat{I}$. Hence, $[X\otimes\lambda,Y\otimes\mu]\in L$
for all $i$, $j$, $s$ and $t$. Therefore, $A^{(1)}=L$ is perfect,
as required. 

It remains to show that $A=LL+L$. Note that $V'_{ij}\otimes\Lambda_{A}(i,j)\subseteq L\subseteq A$
for all $(i,j)\neq(0,0)$. Suppose that $j\neq0$. Then for all $\lambda\in\Lambda_{A}(i,j)$
we have $(e_{11}\otimes\lambda)(e_{12}\otimes\mathbf{1}_{i})\in(V'_{ij}\otimes\lambda)(V'_{jj}\otimes\mathbf{1}_{j})\subseteq LQ\subseteq LL$.
Therefore, $V_{ij}\otimes\Lambda_{A}(i,j)\subseteq LL+L$ for all
$(i,j)\neq(0,0)$. Now, assume that $j=0$. Then either $i\neq0$
or $i=0$. The case when $i\neq0$ is similar to above. Suppose that
$i=0$. Then by using Proposition \ref{prop:(i) =00003D (ii) =00003D (iii)}
and the formula (\ref{eq:W'ij}) we obtain 
\begin{eqnarray*}
V_{00}\otimes\Lambda_{A}(0,0) & = & V_{00}\otimes\sum_{i\in I}\Lambda_{A}(0,i)\Lambda_{A}(i,0)\\
 & = & \sum_{i\in I}(V_{0i}\otimes\Lambda_{A}(0,i))(V_{i0}\otimes\Lambda_{A}(i,0)\\
 & = & \sum_{i\in I}(V'_{0i}\otimes\Lambda_{A}(0,i))(V'_{i0}\otimes\Lambda_{A}(i,0)\subseteq LL.
\end{eqnarray*}
Therefore, $A=LL+L$. 
\end{proof}
\begin{proof}[Proof of Theorem \ref{thm:main}]
 This follows from Proposition \ref{prop:(i) =00003D (ii) =00003D (iii)}
and Theorem \ref{thm:L=00003D=00005BA,A=00005D is perfect}. 
\end{proof}
\begin{proof}[Proof of Theorem \ref{thm:fd}]
 Let $S$ be a Levi subsalgebra of $A$. Then by Proposition \ref{levi},
$A$ is $S$-modgenerated. Since $S\cong A/\rad(A)$, the algebra
$S$ is $k$-perfect whenever $A$ is $k$-perfect. Therefore the
result follows from Theorem \ref{thm:L=00003D=00005BA,A=00005D is perfect}.

\end{proof}

\end{document}